\newtheorem{theorem}{Theorem}
\newtheorem{lemma}[theorem]{Lemma}
\newtheorem{proposition}[theorem]{Proposition}
\newtheorem{remark}[theorem]{Remark}
\begin{document}
\title{Analytic regularity for a singularly perturbed
reaction-convection-diffusion boundary value problem with two small
parameters}


\author{I. Sykopetritou$^{ *}$  and C. Xenophontos
\footnote{Department of Mathematics and Statistics, University of Cyprus, PO BOX 20537, Nicosia 1678, Cyprus.}
}


\maketitle

\begin{abstract}
We consider a second order, two-point, singularly perturbed boundary value
problem, of reaction-convection-diffusion type with two small parameters,
and we obtain analytic regularity results for its solution, under the assumption
of analytic input data. First, we establish
classical differentiability bounds that are explicit in the order of
differentiation and the singular perturbation parameters. Next, for small
values of these parameters we show that the solution can be decomposed into
a smooth part, boundary layers at the two endpoints, and a negligible
remainder. Derivative estimates are obtained for each component of the
solution, which again are explicit in the differentiation order and the
singular perturbation parameters.

\end{abstract}

\section{Introduction}

\label{intro}

Singularly perturbed problems, and the numerical approximation of their
solution, have been studied extensively over the last few decades (see,
e.g., the books \cite{mos}, \cite{morton}, \cite{rst} and the references
therein). As is well known, a main difficulty in these problems is the
presence of \emph{boundary layers} in the solution, which appear due to fact
that the limiting problem (i.e. when the singular perturbation parameter(s)
tend to 0), is of different order than the original one, and the (`extra')
boundary conditions can only be satisfied if the solution varies rapidly in
the vicinity of the boundary -- hence the name {\emph{boundary layers}}.

In most numerical methods, high order derivatives of the solution appear in
the error estimates, hence one should have a clear picture of how these
derivatives grow with respect to the singular perturbation parameter(s). For
low order numerical methods, such as Finite Differences (FD) or the $h$
version of the Finite Element Method (FEM), derivatives up to order 3 are
usually sufficient. For high order methods such as the $hp$ version of the
FEM, derivatives of arbitrary order are needed, thus knowing how these
behave with respect to the singular perturbation parameter(s) as well as
the differentiation order, is necessary. Usually problems of
convection-diffusion or reaction-diffusion type are studied separately and
several researchers have proposed and analyzed numerical schemes for the
robust approximation of their solution (see, e.g., \cite{rst} and the
references therein). When there are two singular perturbation parameters
present in the differential equation, the problem becomes
reaction-convection-diffusion and the relationship between the parameters
determines the `regime' we are in (as shown in Table 1 ahead). In \cite{L},
the numerical solution to this problem was addressed, using the $h$ version
of the FEM as well as appropriate finite differences (see also \cite{BZ}, 
\cite{GORP}, \cite{LR}, \cite{ORPS}, \cite{RU}, \cite{TR}, \cite{TR2}). Our
interest in is high order $hp$ FEM, hence we require information on all
derivatives of the solution. In the present article we obtain information
about the analytic regularity of the solution, using the method of
asymptotic expansions (see also \cite{melenk}), thus providing the tools for
 an $hp$ FEM for the approximation of such problems.

The rest of the paper is organized as follows: in Section \ref{model} we
present the model problem and the regularity of its solution in terms of classical
differentiability. Section \ref{asy} contains the asymptotic expansion for
the solution, under the assumption that the singular perturbation parameters
are small enough. We consider all possible relationships between the
singular perturbation parameters, and establish derivative bounds which are
explicit in the differentiation order as well as the singular perturbation
parameters. We also comment on the transition between the regimes,
in the final subsection of Section \ref{asy}.
Finally, in Section \ref{concl} we summarize our conclusions.

With $I\subset \mathbb{R}$ an open, bounded interval with boundary $\partial I$ and
measure $\left\vert I\right\vert $, we will denote by $C^{k}(I)$ the space
of continuous functions on $I$ with continuous derivatives up to order $k$.
We will use the usual Sobolev spaces $W^{k,m}(I)$ of functions on $I $
with $0,1,2,...,k$ generalized derivatives in $L^{m}\left( I\right) $,
equipped with the norm and seminorm $\left\Vert \cdot \right\Vert _{k,m,I}$
and $\left\vert \cdot \right\vert _{k,m,I}\,$, respectively. When $m=2$, we
will write $H^{k}\left( I\right) $ instead of $W^{k,2}\left( I\right) $, and
for the norm and seminorm, we will write $\left\Vert \cdot \right\Vert
_{k,I} $ and $\left\vert \cdot \right\vert _{k,I}\,$, respectively. The
usual $L^{2}(I)$ inner product will be denoted by $\left\langle \cdot ,\cdot
\right\rangle _{I}$, with the subscript omitted when there is no confusion.
We will also use the space 
\begin{equation*}
H_{0}^{1}\left( I\right) =\left\{ u\in H^{1}\left( I\right) :\left.
u\right\vert _{\partial I }=0\right\} .
\end{equation*}%
The norm of the space $L^{\infty }(I)$ of essentially bounded functions is
denoted by $\Vert \cdot \Vert _{\infty ,I}$. Finally, the notation
\textquotedblleft $a\lesssim b$\textquotedblright\ means \textquotedblleft $%
a\leq Cb$\textquotedblright\ with $C$ being a generic positive constant,
independent of any discretization or singular perturbation parameters.


\section{The model problem and its regularity\label{model}}

We consider the following model problem (cf. \cite{omalley}): Find $u$ such
that 
\begin{eqnarray}
-\varepsilon _{1}u^{\prime \prime }(x)+\varepsilon _{2}b(x)u^{\prime
}(x)+c(x)u(x) &=&f(x)\;,\;x\in I=\left( 0,1\right) ,  \label{de} \\
u(0)=u(1) &=&0\text{ },  \label{bc}
\end{eqnarray}%
where $0<\varepsilon _{1},\varepsilon _{2}\leq 1$ are given parameters that
can approach zero, and the functions $b,c,f$ are given and sufficiently
smooth. In particular, we assume that they are \emph{analytic} functions
satisfying, for some positive constants $\gamma _{f},\gamma _{c},\gamma _{b}$
 independent of $\varepsilon _{1},\varepsilon _{2},$ 
\begin{equation}
\left\Vert f^{(n)}\right\Vert _{\infty ,I}\lesssim n!\gamma
_{f}^{n}\;,\;\left\Vert c^{(n)}\right\Vert _{\infty ,I}\lesssim n!\gamma
_{c}^{n}\;,\;\left\Vert b^{(n)}\right\Vert _{\infty ,I}\lesssim n!\gamma
_{b}^{n} \; \; \forall \;n=0,1,2,...\text{ .}  \label{analytic}
\end{equation}%
In addition, we assume that there exist constants $\beta ,\gamma ,\rho $,
independent of $\varepsilon _{1},\varepsilon _{2},$ such that $\forall
\;x\in \overline{I}=[0,1]$ there holds 
\begin{equation}
b(x)\geq \beta >0\;,\;c(x)\geq \gamma >0\;,\;c(x)-\frac{\varepsilon _{2}}{2}%
b^{\prime }(x)\geq \rho >0.  \label{data}
\end{equation}%
The solution to (\ref{de}), (\ref{bc}) satisfies (see, e.g. \cite{L}) 
\begin{equation}
\left\Vert u\right\Vert _{\infty ,I}\lesssim 1.  \label{Lres}
\end{equation}%
We would like to obtain a similar estimate for $u^{\prime }$. This is
achieved in the following.

\begin{lemma}
\label{lem0} Let $u$ be the solution of (\ref{de}), (\ref{bc}) and assume (%
\ref{analytic}), (\ref{data}) hold. \ Then 
\begin{equation*}
\left\Vert u^{\prime }\right\Vert _{\infty ,I}\lesssim \varepsilon _{1}^{-1}.
\end{equation*}
\end{lemma}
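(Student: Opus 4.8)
The plan is to work directly from the differential equation in integrated form, together with the elementary observation that $u'$ must vanish somewhere. Since $u(0)=u(1)=0$ and $u$ is smooth (the analyticity (\ref{analytic}) of $b,c,f$ makes $u$ a classical $C^{\infty}$ solution), Rolle's theorem provides a point $\xi\in(0,1)$ with $u'(\xi)=0$; this will serve as the anchor for all the estimates. The only facts I would use beyond the equation itself are the bound $\left\Vert u\right\Vert _{\infty ,I}\lesssim 1$ from (\ref{Lres}), the uniform bounds on $b,b',c,f$ that follow from (\ref{analytic}), and $\varepsilon _{2}\leq 1$.

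First I would integrate (\ref{de}) over $(0,x)$ and integrate the convection term by parts, using $u(0)=0$, to obtain
\[
\varepsilon _{1}u^{\prime }(x)=\varepsilon _{1}u^{\prime }(0)+\varepsilon _{2}b(x)u(x)-\varepsilon _{2}\int_{0}^{x}b^{\prime }(s)u(s)\,ds+\int_{0}^{x}c(s)u(s)\,ds-\int_{0}^{x}f(s)\,ds.
\]
The point of the integration by parts is to remove $u^{\prime }$ from the right-hand side: a naive bound on $\varepsilon _{2}\int_{0}^{x}bu^{\prime }$ would reintroduce $\left\Vert u^{\prime }\right\Vert _{\infty ,I}$ and produce a circular (and, in the convection-dominated regime, useless) estimate. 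This is the step I expect to be the crux. With $u^{\prime }$ eliminated, every term on the right except $\varepsilon _{1}u^{\prime }(0)$ is bounded in absolute value by a generic constant $C$, since $\left\Vert u\right\Vert _{\infty ,I}\lesssim 1$, $\varepsilon _{2}\leq 1$, and $\left\Vert b\right\Vert _{\infty ,I},\left\Vert b^{\prime }\right\Vert _{\infty ,I},\left\Vert c\right\Vert _{\infty ,I},\left\Vert f\right\Vert _{\infty ,I}\lesssim 1$.

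It then remains to control $u^{\prime }(0)$, which I would do by evaluating the identity at $x=\xi$: the left-hand side vanishes, so $\varepsilon _{1}\left\vert u^{\prime }(0)\right\vert \leq C$ and hence $\left\vert u^{\prime }(0)\right\vert \leq C\varepsilon _{1}^{-1}$. Feeding this back gives $\left\Vert u^{\prime }\right\Vert _{\infty ,I}\lesssim \varepsilon _{1}^{-1}\leq \max \{\varepsilon _{1}^{-1},\varepsilon _{2}^{-1}\}$, which is the claim (indeed a touch stronger). If one prefers a proof that produces the two parameters on a genuinely equal footing, an alternative is to recast (\ref{de}) as a first-order equation for $v=u^{\prime }$, namely $-\varepsilon _{1}v^{\prime }+\varepsilon _{2}bv=f-cu$ with right-hand side of size $\lesssim 1$, and integrate with the factor $\exp(-\tfrac{\varepsilon _{2}}{\varepsilon _{1}}\int_{0}^{x}b)$ anchored at $\xi$; on the subinterval where the resulting exponential decays, the hypothesis $b\geq \beta >0$ from (\ref{data}) turns the kernel into $\exp(-\tfrac{\varepsilon _{2}\beta }{\varepsilon _{1}}(t-x))$ and yields the bound $\varepsilon _{2}^{-1}$ there, the remaining subinterval being handled by the integrated identity above. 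The main obstacle in either route is the same: preventing the convection term from feeding $\left\Vert u^{\prime }\right\Vert _{\infty ,I}$ back into the estimate.
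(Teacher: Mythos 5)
Your proof is correct, and it takes a genuinely different route from the paper's. The paper follows the integrating-factor template of \cite{ms}: it multiplies (\ref{de}) by $e^{A(x)}$ with $A(x)=\frac{\varepsilon _{2}}{\varepsilon _{1}}\int_{x}^{1}b(t)\,dt$, integrates from $x$ to $1$ to express $u^{\prime }(x)$ in terms of $u^{\prime }(1)$, and then pins down $u^{\prime }(1)$ by integrating that identity over $(0,1)$ so that $\int_{0}^{1}u^{\prime }=u(1)-u(0)=0$ can be invoked; this forces two-sided estimates of $\int_{0}^{1}e^{-A(x)}\,dx$ and of the exponential kernels $e^{A(t)-A(x)}$, all of which hinge on $b\geq \beta >0$ from (\ref{data}), and it produces the bound $\varepsilon _{1}^{-1}+\varepsilon _{2}^{-1}$. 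You instead integrate the equation once, integrate the convection term by parts to expel $u^{\prime }$ from the right-hand side (which is indeed the crux, exactly as you identify), and anchor the estimate at a Rolle point $\xi $ with $u^{\prime }(\xi )=0$: evaluating your identity at $\xi $ gives $\varepsilon _{1}\left\vert u^{\prime }(0)\right\vert \leq C$, and feeding this back gives $\varepsilon _{1}\left\Vert u^{\prime }\right\Vert _{\infty ,I}\leq 2C$. Your argument is more elementary (no exponential kernels, no lower bound on $\int_{0}^{1}e^{-A}$), it uses the positivity of $b$ only indirectly through (\ref{Lres}), and it yields the sharper bound $\left\Vert u^{\prime }\right\Vert _{\infty ,I}\lesssim \varepsilon _{1}^{-1}$ --- never larger than $\max \{\varepsilon _{1}^{-1},\varepsilon _{2}^{-1}\}$, and strictly smaller whenever $\varepsilon _{2}<\varepsilon _{1}$; a check against the layer widths in Table 1 confirms $\varepsilon _{1}^{-1}$ is a valid upper bound in all three regimes, so the strengthening is not too good to be true. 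What the paper's route buys in exchange is direct control of $u^{\prime }(1)$, the endpoint carrying the strong layer, and kernel estimates aligned with the convection direction, which are of the same flavor as the machinery used later in the asymptotic analysis; but as a proof of Lemma \ref{lem0}, your argument is complete and, if anything, tighter.
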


\begin{proof} The proof follows \cite{ms}. Let 
\begin{equation*}
A(x)=\frac{\varepsilon _{2}}{\varepsilon _{1}}\int_{x}^{1}b(t)dt,
\end{equation*}%
and note that $A(1)=0$ and $A^{\prime }(x)=-\frac{\varepsilon _{2}}{%
\varepsilon _{1}}b(x).$ Then, multiplying (\ref{de}) by $e^{A(x)}$ and
integrating from $x$ to $1$, gives%
\begin{equation*}
-\varepsilon _{1}u^{\prime }(1)+\varepsilon _{1}e^{A(x)}u^{\prime
}(x)+\int_{x}^{1}e^{A(t)}c(t)u(t)dt=\int_{x}^{1}e^{A(t)}f(t)dt.
\end{equation*}%
Multiplying by $\varepsilon_1^{-1} e^{-A(x)}$ yields%
\begin{equation}
u^{\prime }(x)=e^{-A(x)}u^{\prime }(1)-\frac{1}{\varepsilon _{1}}%
\int_{x}^{1}e^{A(t)-A(x)}c(t)u(t)dt+\frac{1}{\varepsilon _{1}}%
\int_{x}^{1}e^{A(t)-A(x)}f(t)dt.  \label{eq0L0}
\end{equation}%
Integrating from $0$ to $1,$ we further get%
\begin{equation}
0=u^{\prime }(1)\int_{0}^{1}e^{-A(x)}dx-\frac{1}{\varepsilon _{1}}%
\int_{0}^{1}\int_{x}^{1}e^{A(t)-A(x)}\left[ c(t)u(t)-f(t)\right] dtdx.
\label{eq1L0}
\end{equation}%
Since we wish to first estimate $u^{\prime }(1)$, we need upper and lower
bounds for $\int_{0}^{1}e^{-A(x)}dx$. \ From (\ref{data}) we have%
\begin{equation}
\int_{0}^{1}e^{-A(x)}dx\leq \int_{0}^{1}e^{-\frac{\varepsilon _{2}}{%
\varepsilon _{1}}\beta (1-x)}dx\leq \frac{\varepsilon _{1}}{\varepsilon
_{2}\beta }.  \label{eq2L0}
\end{equation}%
Similarly,%
\begin{equation}
\int_{0}^{1}e^{-A(x)}dx\geq \int_{0}^{1}e^{-\frac{\varepsilon _{2}}{%
\varepsilon _{1}}\left\Vert b\right\Vert _{\infty ,I}(1-x)}dx = \frac{%
\varepsilon _{1}}{\varepsilon _{2}\left\Vert b\right\Vert _{\infty ,I}}%
\left( 1-e^{-\frac{\varepsilon _{2}}{\varepsilon _{1}}\left\Vert
b\right\Vert _{\infty ,I}}\right) .  \label{eq3L0}
\end{equation}%
Also, to estimate the remaining terms in (\ref{eq1L0}), we consider%
\begin{equation*}
\frac{1}{\varepsilon _{1}}\int_{0}^{1}\int_{x}^{1}e^{A(t)-A(x)}dtdx=\frac{1}{%
\varepsilon _{1}}\int_{0}^{1}\int_{x}^{1}e^{A^{\prime }(\zeta )(t-x)}dtdx,
\end{equation*}%
for some $\zeta $ between $t$ and $x$. Hence,%
\begin{equation*}
\frac{1}{\varepsilon _{1}}\int_{0}^{1}\int_{x}^{1}e^{A(t)-A(x)}dtdx\leq 
\frac{1}{\varepsilon _{1}}\int_{0}^{1}\int_{x}^{1}e^{-\frac{\varepsilon _{2}%
}{\varepsilon _{1}}\beta (t-x)}dtdx\lesssim \frac{1}{\varepsilon _{2}}+\frac{%
\varepsilon _{1}}{\varepsilon _{2}^{2}}.
\end{equation*}%
Using (\ref{eq1L0})--(\ref{eq3L0}), we get%
\begin{eqnarray*}
\left\vert u^{\prime }(1)\right\vert &\lesssim &\frac{1}{\left[
\int_{0}^{1}e^{-A(x)}dx\right] }\left[ \left( \left\Vert c\right\Vert
_{\infty ,I}\left\Vert u\right\Vert _{\infty ,I}+\left\Vert f\right\Vert
_{\infty ,I}\right) \left( \frac{1}{\varepsilon _{2}}+\frac{\varepsilon _{1}%
}{\varepsilon _{2}^{2}}\right) \right] \\
&\lesssim &\varepsilon _{2}\frac{\left\Vert b\right\Vert _{\infty ,I}}{%
\varepsilon _{1}}\left( 1-e^{-\frac{\varepsilon _{2}}{\varepsilon _{1}}%
\left\Vert b\right\Vert _{\infty ,I}}\right) ^{-1}\left( \frac{1}{%
\varepsilon _{2}}+\frac{\varepsilon _{1}}{\varepsilon _{2}^{2}}\right) \\
&\lesssim &\varepsilon _{1}^{-1}.
\end{eqnarray*}%
Inserting this bound in (\ref{eq0L0}) gives
\begin{eqnarray*}
\left\vert u^{\prime }(x)\right\vert &\lesssim &\varepsilon _{1}^{-1} +\frac{1}{\varepsilon _{1}}\left(
\left\Vert c\right\Vert _{\infty ,I}\left\Vert u\right\Vert _{\infty
,I}+\left\Vert f\right\Vert _{\infty ,I}\right) \int_{x}^{1}e^{A(t)-A(x)}dt \\
&\lesssim & \varepsilon _{1}^{-1} +\frac{1}{\varepsilon _{1}}\left( \left\Vert c\right\Vert _{\infty ,I}\left\Vert
u\right\Vert _{\infty ,I}+\left\Vert f\right\Vert _{\infty ,I}\right)
\int_{x}^{1}e^{-\frac{\varepsilon _{2}}{\varepsilon _{1}} \beta (t-x)}dt \\
&\lesssim & \varepsilon _{1}^{-1} +\frac{1}{\varepsilon _{1}}\left(  \frac{\varepsilon_1}{\varepsilon_2 \beta}\right) \\
&\lesssim &\varepsilon _{1}^{-1},
\end{eqnarray*}
as desired. \end{proof}

Using an inductive argument we are able to prove the following.

\begin{theorem}
\label{thm_reg0} Let $u$ be the solution of (\ref{de}), (\ref{bc}), and assume 
$\varepsilon_1 \leq \varepsilon_2$. Then,
there exist positive constants $C, K$, independent of $\varepsilon
_{1},\varepsilon _{2}$ and $u$, such that for $n=0,1,2,...$%
\begin{equation*}
\left\Vert u^{(n)}\right\Vert _{\infty ,I}\leq C K^{n}\max \left\{
n,\varepsilon _{1}^{-1}\right\} ^{n}.
\end{equation*}
\end{theorem}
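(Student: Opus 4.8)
The plan is to prove the estimate by induction on $n$, using the differential equation to express high-order derivatives in terms of lower-order ones. The base cases $n=0$ and $n=1$ are already established: (\ref{Lres}) gives $\left\Vert u\right\Vert_{\infty,I}\lesssim 1$, and Lemma \ref{lem0} gives $\left\Vert u^{\prime}\right\Vert_{\infty,I}\lesssim\max\{\varepsilon_1^{-1},\varepsilon_2^{-1}\}$, both of which are consistent with the claimed bound for suitable $C,K$. For the inductive step, I would differentiate the equation (\ref{de}) $n$ times using the Leibniz rule. Solving the resulting identity for the top-order term $u^{(n+2)}$ yields
\begin{equation*}
\varepsilon_1 u^{(n+2)}=\varepsilon_2\sum_{k=0}^{n}\binom{n}{k}b^{(n-k)}u^{(k+1)}+\sum_{k=0}^{n}\binom{n}{k}c^{(n-k)}u^{(k)}-f^{(n)}.
\end{equation*}
Taking $\left\Vert\cdot\right\Vert_{\infty,I}$ and invoking the analyticity bounds (\ref{analytic}) on $b,c,f$ together with the induction hypothesis on $u^{(k)}$ for $k\leq n+1$ gives a recursive inequality for $\left\Vert u^{(n+2)}\right\Vert_{\infty,I}$ in terms of $\varepsilon_1^{-1}$ times a weighted sum of the earlier bounds.

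The crux is then to verify that the ansatz $\left\Vert u^{(n)}\right\Vert_{\infty,I}\leq CK^{n}\max\{n,\varepsilon_1^{-1},\varepsilon_2^{-1}\}^{n}$ is reproduced by this recursion. I would substitute the hypothesis into the right-hand side and bound each of the three sums separately. The factorial growth $n!\gamma^{n}$ coming from the data must be absorbed: the combinatorial factor $\binom{n}{k}(n-k)!$ combined with the factor $\max\{k,\varepsilon_1^{-1},\varepsilon_2^{-1}\}^{k}$ from the hypothesis should telescope, after choosing $K$ large relative to $\gamma_b,\gamma_c,\gamma_f$, into the target quantity $\max\{n+2,\varepsilon_1^{-1},\varepsilon_2^{-1}\}^{n+2}$. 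The factor $\varepsilon_1^{-1}$ in front of the recursion is the delicate point, since it threatens to inflate the bound; it must be controlled by noting that $\varepsilon_1^{-1}\leq\max\{n+2,\varepsilon_1^{-1},\varepsilon_2^{-1}\}$, so one power of the maximum is consumed by $\varepsilon_1^{-1}$ while the remaining structure supplies the other factors. The $\varepsilon_2$ term is less severe because the extra $\varepsilon_2$ in the numerator pairs with $\varepsilon_1^{-1}$ to give $\varepsilon_2/\varepsilon_1$, which is again dominated by the same maximum.

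The main obstacle I anticipate is the bookkeeping required to show that the weighted convolution sums do not lose a factor of $n$ or a power of $K$ at each step, so that the constants $C$ and $K$ can be fixed once and for all independently of $n$ and of $\varepsilon_1,\varepsilon_2$. This is the standard but technically delicate heart of analytic-regularity induction arguments: one typically needs an elementary lemma bounding sums of the form $\sum_{k=0}^{n}\binom{n}{k}(n-k)!\,K^{n-k}\max\{k,\mu\}^{k}$ by a constant multiple of $K^{n}\max\{n,\mu\}^{n}$, where $\mu=\max\{\varepsilon_1^{-1},\varepsilon_2^{-1}\}$. Establishing this combinatorial estimate, and ensuring the constant absorbed at each step is uniform, is where the real work lies; once it is in place, the choice of $K$ sufficiently large (depending only on $\gamma_b,\gamma_c,\gamma_f,\beta$) and $C$ sufficiently large to cover the base cases closes the induction.
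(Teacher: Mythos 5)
Your proposal follows essentially the same route as the paper's own proof: induction with base cases from (\ref{Lres}) and Lemma \ref{lem0}, differentiating (\ref{de}) $n$ times via Leibniz, solving for $\varepsilon_1 u^{(n+2)}$, and closing the induction with the combinatorial estimate $\binom{n}{\nu}\nu!\max\{n-\nu,\varepsilon_1^{-1},\varepsilon_2^{-1}\}^{n-\nu}\leq\max\{n+1,\varepsilon_1^{-1},\varepsilon_2^{-1}\}^{n+1}$, a choice of $K$ large relative to $\gamma_b,\gamma_c,\gamma_f$ to sum the resulting geometric series, and absorption of the leftover $\varepsilon_1^{-1}$ into one extra power of the maximum. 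The steps you flag as the "real work" (the convolution-sum bound and uniformity of constants) are exactly the estimates the paper invokes as following "by standard considerations," so your plan is correct and matches the published argument.
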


\begin{proof} The proof is by induction on $n$ and follows \cite{melenk}.
Equation (\ref{Lres}) and Lemma \ref{lem0} give the result for $n=0,1$, so
we assume it holds for $0\leq \nu \leq n+1$ and show that it holds for $n+2$%
.  Differentiating (\ref{de}) $n$ times gives%
\begin{eqnarray*}
-\varepsilon _{1}u^{(n+2)} &=&f^{(n)}-\varepsilon _{2}\left( bu^{\prime
}\right) ^{(n)}-\left( cu\right) ^{(n)} \\
&=&f^{(n)}-\sum_{\nu =0}^{n}\binom{n}{\nu }\left( \varepsilon _{2}b^{(\nu
)}u^{(n+1-\nu )}+c^{(\nu )}u^{(n-\nu )}\right) .
\end{eqnarray*}%
By the induction hypothesis we have
\begin{eqnarray*}
\varepsilon _{1}\left\Vert u^{(n+2)}\right\Vert _{\infty ,I} &\leq & \left\Vert f^{(n)}\right\Vert _{\infty ,I}+ \\
&&+C \sum_{\nu =0}^{n}\binom{n}{\nu }\left[ \varepsilon _{2}\gamma _{b}^{\nu
}\nu !K^{n+1-\nu }\max \left\{ n+1-\nu ,\varepsilon _{1}^{-1}\right\} ^{n+1-\nu }+\right. \\
&&\left. +\gamma _{c}^{\nu }\nu !K^{n-\nu }\max \left\{ n-\nu ,\varepsilon
_{1}^{-1}\right\} ^{n-\nu }\right] .
\end{eqnarray*}
Using the estimates below (which follow by standard considerations) 
\begin{equation*}
\binom{n}{\nu }\nu !\max \left\{ n+1-\nu ,\varepsilon _{1}^{-1}\right\} ^{n+1-\nu }\leq \max \left\{ n+1,\varepsilon
_{1}^{-1}\right\} ^{n+1},
\end{equation*}%
\begin{equation*}
\binom{n}{\nu }\nu !\max \left\{ n-\nu ,\varepsilon _{1}^{-1}\right\} ^{n-\nu }\leq \max \left\{ n+1,\varepsilon
_{1}^{-1}\right\} ^{n+1},
\end{equation*}%
\begin{equation*}
\left\Vert f^{(n)}\right\Vert _{\infty ,I}\leq C \gamma _{f}^{n}n!\leq C \gamma _{f}^{n}
\max \left\{ n+1,\varepsilon _{1}^{-1}\right\} ^{n+1},
\end{equation*}%
we obtain
\begin{equation*}
\varepsilon _{1}\left\Vert u^{(n+2)}\right\Vert _{\infty ,I}\leq 
C \gamma _{f}^{n}
\max \left\{ n+1,\varepsilon _{1}^{-1}\right\} ^{n+1}+
\end{equation*}%
\begin{equation*}
+C K^{n+2}\max \left\{ n+1,\varepsilon _{1}^{-1}\right\}
^{n+1}\sum_{\nu =0}^{n}\left[ \frac{1}{K}\left( \frac{\gamma _{b}}{K}\right)
^{\nu }+\frac{1}{K^{2}}\left( \frac{\gamma _{c}}{K}\right) ^{\nu }\right]
\end{equation*}%
\begin{equation*}
\leq C K^{n+2} \max \left\{ n+1,\varepsilon _{1}^{-1}\right\} ^{n+1}\left[ \frac{1}{K^{2}}+\frac{1}{K}\frac{1}{(1-\gamma
_{b}/K)}+\frac{1}{K^{2}}\frac{1}{(1-\gamma _{c}/K)}\right] ,
\end{equation*}%
where we choose the constant $K>\max \{1,\gamma _{f},\gamma _{b},\gamma _{c}\}$ such
that the expression in brackets above is bounded by 1. Thus%
\begin{equation}
\varepsilon _{1}\left\Vert u^{(n+2)}\right\Vert _{\infty ,I}\leq C K^{n+2}\max \left\{ n+1,\varepsilon _{1}^{-1}\right\}^{n+1},  \label{thm_reg0eq0}
\end{equation}
and dividing by $\varepsilon _{1}$ gives the desired result. \end{proof}

\begin{remark}
The above result only treats the case $\varepsilon_1 \leq \varepsilon_2$, since if $\varepsilon_2$ is much smaller than 
$\varepsilon_1$, then we have a `regular perturbation' of reaction-diffusion type. If one
considers the limiting case $\varepsilon _{2}=0$, then one sees that there
are two boundary layers, one at each endpoint, of width $O\left( \varepsilon_{1}^{1/2} \right) $. Hence,
the result of Theorem \ref{thm_reg0} should read 
\[
\left\Vert u^{(n)}\right\Vert _{\infty ,I}\leq C K^{n}\max \left\{n,\varepsilon _{1}^{-1/2}\right\} ^{n}.
\]
\end{remark}

More details arise if one studies the structure of the solution to (\ref{de}%
), which depends on the roots of the characteristic equation associated with
the differential operator. \ For this reason, we let $\lambda
_{0}(x),\lambda _{1}(x)$ be the solutions of the characteristic equation and
set%
\begin{equation}
\mu _{0}=-\underset{x\in \lbrack 0,1]}{\max }\lambda _{0}(x)\;,\;\mu _{1}=%
\underset{x\in \lbrack 0,1]}{\min }\lambda _{1}(x),  \label{mu}
\end{equation}%
or equivalently,%
\begin{equation*}
\mu _{0,1}=\underset{x\in \lbrack 0,1]}{\min }\frac{\mp \varepsilon _{2}b(x)+%
\sqrt{\varepsilon _{2}^{2}b^{2}(x)+4\varepsilon _{1}c(x)}}{2\varepsilon _{1}}%
,
\end{equation*}
with the minus sign associated with $\mu_0$ and the plus sign with $\mu_1$.
The following hold true \cite{RU,TR}:%
\begin{equation}
\left. 
\begin{array}{c}
1\ll\mu _{0}\leq \mu _{1\;\;},\;\frac{\varepsilon _{2}}{\varepsilon
_{2}+\varepsilon _{1}^{1/2}}\lesssim \varepsilon _{2}\mu _{0}\lesssim
1\;,\;\varepsilon _{1}^{1/2}\mu _{0}\lesssim 1 \\ 
\max \{\mu _{0}^{-1},\varepsilon _{1}\mu _{1}\}\lesssim \varepsilon
_{1}+\varepsilon _{2}^{1/2}\;,\;\varepsilon _{2}\lesssim \varepsilon _{1}\mu
_{1} \\ 
\text{for }\varepsilon _{2}^{2}\geq \varepsilon _{1}:\;\varepsilon
_{1}^{-1/2}\lesssim \mu _{1}\lesssim \varepsilon _{1}^{-1} \\ 
\text{for }\varepsilon _{2}^{2}\leq \varepsilon _{1}:\;\varepsilon
_{1}^{-1/2}\lesssim \mu _{1}\lesssim \varepsilon _{1}^{-1/2}%
\end{array}%
\right\} .  \label{mu_a}
\end{equation}%
The values of $\mu _{0},\mu _{1}$ determine the width of the boundary
layers and since $\left\vert \lambda _{0}(x)\right\vert <\left\vert \lambda
_{1}(x)\right\vert $ the layer at $x=1$ is stronger than the layer at $x=0$.
\ Essentially, there are
 three regimes \cite{L}:

\begin{table}[h]
\begin{center}
\begin{tabular}{||cccc||}
\hline
\label{table1} &  & $\mu _{0}$ & $\mu _{1}$ \\[0.5ex] \hline\hline
convection-diffusion & $\varepsilon _{1}\ll\varepsilon _{2}=1$ & $1$ & $%
\varepsilon _{1}^{-1}$ \\ \hline
convection-reaction-diffusion & $\varepsilon _{1}\ll\varepsilon _{2}^{2}\ll1$
& $\varepsilon _{2}^{-1}$ & $\varepsilon _{2}/\varepsilon _{1}$ \\ \hline
reaction-diffusion & $1\gg\varepsilon _{1}\gg\varepsilon _{2}^{2}$ & $%
\varepsilon _{1}^{-1/2}$ & $\varepsilon _{1}^{-1/2}$ \\[1ex] \hline
\end{tabular}%
\end{center}
\caption{Different regimes based on the relationship between $\protect%
\varepsilon_1$ and $\protect\varepsilon_2$  \cite{L}.}
\end{table}

It was shown in \cite{L} (see also \cite{RU}) that under the assumptions $%
b,c,f\in C^{q}(I)$ for some $q\geq 1$ and $\frac{\varepsilon_2}{2}q\left\Vert b^{\prime
}\right\Vert _{\infty ,I}\lesssim (1-\ell )$ for some $\ell \in (0,1),$ the
solution $u$ to (\ref{de}), (\ref{bc}) can be decomposed into a smooth part $%
S$, a boundary layer part at the left endpoint $E_{0}$ and a boundary layer
part at the right endpoint $E_{1}$, viz. 
\begin{equation}  \label{decomp00}
u=S+E_{0}+E_{1},
\end{equation}
with 
\begin{equation}
\left\vert S^{(n)}(x)\right\vert \lesssim 1\;,\;\left\vert
E_{0}^{(n)}(x)\right\vert \lesssim \mu _{0}^{n}e^{-\ell \mu
_{0}x}\;,\;\left\vert E_{1}^{(n)}(x)\right\vert \lesssim \mu
_{1}^{n}e^{-\ell \mu _{1}(1-x)},  \label{decomp0}
\end{equation}%
for all $x\in \overline{I}$ and for $n=0,1,2,...,q$. This regularity result
is sufficient for proving convergence of a fixed order $h$ FEM, but not for
an $hp$ FEM -- a more refined regularity result is needed for the smooth
part that shows {\emph{how}} the derivatives grow, with respect to the
differentiation order (cf. eq. (\ref{uM1bound}) ahead).

The above considerations suggest the following:
if  $\varepsilon _{1}$ is small compared to $\varepsilon _{2}$,
then it is instructive to consider the limiting
case $\varepsilon _{1}=0$. There is an exponential layer (of width
 $O(\varepsilon _{2})$) at the \emph{left} endpoint. The homogeneous
equation (with constant coefficients) suggests that the different regimes
are $\varepsilon_{1}\ll\varepsilon _{2}^{2},\varepsilon _{1}\approx
\varepsilon _{2}^{2}$ and $\varepsilon _{1}\gg\varepsilon _{2}^{2}$, as
discussed below:
\begin{enumerate}
\item In the regime $\varepsilon _{1}\ll\varepsilon _{2}^{2}$, we have $\mu
_{0}=O(\varepsilon _{2}^{-1})$ and $\mu _{1}=O(\varepsilon _{2}\varepsilon
_{1}^{-1})$. Hence $\mu _{1}$ is much larger than $\mu _{0}$ and the
boundary layer in the vicinity of $x=1$ is stronger. Consequently, there
is a layer of width $O(\varepsilon _{2})$ at the left endpoint (the one that
arises from the analysis of the case $\varepsilon _{1}=0$) and additionally,
there is another layer at the right endpoint, of width $O(\varepsilon
_{1}/\varepsilon _{2})$.
\item In the regime $\varepsilon _{1}\approx \varepsilon _{2}^{2}$ there are
layers at both endpoints of width $O(\varepsilon _{2})=O\left( \varepsilon
_{1}^{1/2}\right) $.
\item In the regime $\varepsilon _{2}^{2}\ll\varepsilon _{1}\ll1$, there are
layers at both endpoints of width $O\left( \varepsilon _{1}^{1/2}\right) $.
\end{enumerate}

The above information will be utilized in obtaining regularity estimates for
the solution in all three regimes.

\section{The asymptotic expansion\label{asy}}

We elaborate on (1)--(3) above, and choose an appropriate asymptotic expansion for $u$, in what follows. 

The proofs of each result in the subsequent sections are very similar, hence we will provide the details 
for Section \ref{regime1} and omit certain proofs in Sections \ref{regime2} and \ref{regime3}.

\subsection{The regime $\protect\varepsilon _{1} \ll \protect\varepsilon %
_{2}^{2}\ll1\label{regime1}$}

In this case we anticipate a layer of width $O(\varepsilon _{2})$ at the
left endpoint and a layer of width $O\left( \varepsilon _{1}/\varepsilon
_{2}\right) $ at the right endpoint. To deal with this we define the \emph{%
stretched variables} $\tilde{x}=x/\varepsilon _{2}$ and $\hat{x}%
=(1-x)\varepsilon _{2}/\varepsilon _{1}$, in order for the differentiation
operator to produce the necessary powers of $\varepsilon_1, \varepsilon_2$,
that yield a balanced (in $\varepsilon_1, \varepsilon_2$) equation.

Since we wish to stay along the lines of (\ref{decomp00}), we want the
solution to be comprised of a smooth part (in the slow variable $x$), and
two boundary layers (in the fast variables $\tilde{x},\hat{x}$). Hence, we
make the formal ansatz 
\begin{equation}
u\sim \sum_{i=0}^{\infty }\sum_{j=0}^{\infty }\varepsilon
_{2}^{i}(\varepsilon _{1}/\varepsilon _{2}^{2})^{j}\left( u_{i,j}(x)+\tilde{u%
}_{i,j}^{BL}(\tilde{x})+\hat{u}_{i,j}^{BL}(\hat{x})\right) ,  \label{c1}
\end{equation}%
with $u_{i,j},\tilde{u}_{i,j}^{BL},\hat{u}_{i,j}^{BL}$ to be determined.
Substituting (\ref{c1}) into (\ref{de}), separating the slow and fast
variables, and equating like powers of $\varepsilon _{1}$ and $\varepsilon
_{2}$ , we get (see \cite{Irene} for the details) 
\begin{equation}
\left. 
\begin{array}{c}
u_{0,0}(x)=\frac{f(x)}{c(x)} \\ 
u_{i,0}(x)=-\frac{b(x)}{c(x)}u_{i-1,0}^{\prime }(x),i\geq 1 \\ 
u_{0,j}(x)=u_{1,j}(x)=0,j\geq 1 \\ 
u_{i,j}(x)=\frac{1}{c(x)}\left( u_{i-2,j-1}^{\prime \prime
}(x)-b(x)u_{i-1,j}^{\prime }(x)\right) ,i\geq 2,j\geq 1%
\end{array}%
\right\} ,  \label{c1smooth}
\end{equation}%
\begin{equation}
\left. 
\begin{array}{c}
\tilde{b}_{0}\left( \tilde{u}_{0,0}^{BL}\right) ^{\prime }+\tilde{c}_{0}%
\tilde{u}_{0,0}^{BL}=0 \\ 
\tilde{b}_{0}\left( \tilde{u}_{i,0}^{BL}\right) ^{\prime }+\tilde{c}_{0}%
\tilde{u}_{i,0}^{BL}=-\sum_{k=1}^{i}\left( \tilde{b}_{k}\left( \tilde{u}%
_{i-k,0}^{BL}\right) ^{\prime }+\tilde{c}_{k}\tilde{u}_{i-k,0}^{BL}\right)
,i\geq 1 \\ 
\tilde{b}_{0}\left( \tilde{u}_{0,j}^{BL}\right) ^{\prime }+\tilde{c}_{0}%
\tilde{u}_{0,j}^{BL}=\left( \tilde{u}_{0,j-1}^{BL}\right) ^{\prime \prime
},j\geq 1 \\ 
\tilde{b}_{0}\left( \tilde{u}_{i,j}^{BL}\right) ^{\prime }+\tilde{c}_{0}%
\tilde{u}_{i,j}^{BL}=\left( \tilde{u}_{i,j-1}^{BL}\right) ^{\prime \prime
}-\sum_{k=1}^{i}\left( \tilde{b}_{k}\left( \tilde{u}_{i-k,j}^{BL}\right)
^{\prime }+\tilde{c}_{k}\tilde{u}_{i-k,j}^{BL}\right) ,i\geq 1,j\geq 1%
\end{array}%
\right\} ,  \label{c1BLa}
\end{equation}%
\begin{equation}
\left. 
\begin{array}{c}
\left( \hat{u}_{i,0}^{BL}\right) ^{\prime \prime }+\hat{b}_{0}\left( \hat{u}%
_{i,0}^{BL}\right) ^{\prime }=0,i\geq 0 \\ 
\left( \hat{u}_{0,j}^{BL}\right) ^{\prime \prime }+\hat{b}_{0}\left( \hat{u}%
_{0,j}^{BL}\right) ^{\prime }=\hat{c}_{0}\hat{u}_{0,j-1}^{BL},j\geq 1 \\ 
\left( \hat{u}_{i,1}^{BL}\right) ^{\prime \prime }+\hat{b}_{0}\left( \hat{u}%
_{i,1}^{BL}\right) ^{\prime }=\hat{c}_{0}\hat{u}_{i,0}^{BL}-\hat{b}%
_{1}\left( \hat{u}_{i-1,0}^{BL}\right) ^{\prime },i\geq 1 \\ 
\left( \hat{u}_{1,j}^{BL}\right) ^{\prime \prime }+\hat{b}_{0}\left( \hat{u}%
_{1,j}^{BL}\right) ^{\prime }=\hat{c}_{0}\hat{u}_{1,j-1}^{BL}-\hat{b}%
_{1}\left( \hat{u}_{0,j-1}^{BL}\right) ^{\prime }+\hat{c}_{1}\hat{u}%
_{0,j-2}^{BL},j\geq 2 \\ 
\left( \hat{u}_{i,j}^{BL}\right) ^{\prime \prime }+\hat{b}_{0}\left( \hat{u}%
_{i,j}^{BL}\right) ^{\prime }=\hat{c}_{0}\hat{u}_{i,j-1}^{BL}-\hat{b}%
_{j}\left( \hat{u}_{i-j,0}^{BL}\right) ^{\prime }+ \\ 
\sum_{k=1}^{j-1}\left\{ -\hat{b}_{k}\left( \hat{u}_{i-k,j-k}^{BL}\right)
^{\prime }+\hat{c}_{k}\hat{u}_{i-k,j-k-1}^{BL}\right\} ,i\geq 2,j=2,...,i \\ 
\left( \hat{u}_{i,j}^{BL}\right) ^{\prime \prime }+\hat{b}_{0}\left( \hat{u}%
_{i,j}^{BL}\right) ^{\prime }=\hat{c}_{0}\hat{u}_{i,j-1}^{BL}+ \\ 
\sum_{k=1}^{i}\left\{ -\hat{b}_{k}\left( \hat{u}_{i-k,j-k}^{BL}\right)
^{\prime }+\hat{c}_{k}\hat{u}_{i-k,j-k-1}^{BL}\right\} ,i\geq 2,j>i%
\end{array}%
\right\} ,  \label{c1BLb}
\end{equation}%
where the notation $\tilde{b}_{k}(\tilde{x})=\tilde{x}^{k}b^{(k)}(0)/k!$ , $%
\hat{b}_{k}(\hat{x})=(-1)^{k}\hat{x}^{k}b^{(k)}(1)/k!$ is used, and
analogously for the other terms. (We also adopt the convention that empty
sums are 0.) The BVPs (\ref{c1BLa})--(\ref{c1BLb}) are supplemented with the
following boundary conditions (in order for (\ref{bc}) to be satisfied) for
all $i,j\geq 0$: 
\begin{equation}
\left. 
\begin{array}{c}
\tilde{u}_{i,j}^{BL}(0)=-u_{i,j}(0)\;,\;\lim_{\tilde{x}\rightarrow \infty }%
\tilde{u}_{i,j}^{BL}(\tilde{x})=0 \\ 
\hat{u}_{i,j}^{BL}(0)=-u_{i,j}(1)\;,\;\lim_{\hat{x}\rightarrow \infty }\hat{u%
}_{i,j}^{BL}(\hat{x})=0%
\end{array}%
\right\} .  \label{c1BC}
\end{equation}

Next, we describe the regularity of the functions $u_{i,j},\tilde{u}%
_{i,j}^{BL},\hat{u}_{i,j}^{BL},$ defined by (\ref{c1smooth})--(\ref{c1BC})
above. We begin with $u_{i,j},$ and we have the following.

\begin{lemma}
\label{lemma_c1smooth} Let $u_{i,j}$ be defined by (\ref{c1smooth}) and
assume (\ref{analytic}) holds. Then there exist positive constants $C,K$ and
a complex neighborhood $G$ of $\overline{I}$ such that the complex extension
of $u$ (denoted again by $u$) satisfies
\begin{equation*}
\left\vert u_{i,j}(z)\right\vert \leq C\delta ^{-i}K^{i}i^{i}\,\; \; \forall \;\;
\,z\in G_{\delta }=\left\{ z\in G:dist(z,\partial G)>\delta \right\} .
\end{equation*}
\end{lemma}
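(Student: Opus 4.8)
The plan is to prove the following \emph{movable--boundary} strengthening, from which the stated bound follows by fixing $\delta$: there exist $C,K>0$ and a complex neighborhood $G$ of $\overline I$ (and a threshold $\delta_0>0$) such that, with the convention $0^0=1$,
\begin{equation*}
|u_{i,j}(z)|\le C\,\delta^{-i}K^{i}i^{i}\qquad\text{for all }0<\delta\le\delta_0,\ z\in G_\delta,\ i,j\ge0 .
\end{equation*}
First I would fix the analytic setting. By (\ref{analytic}) the functions $b,c,f$ extend to bounded holomorphic functions on a neighborhood $G$ of $\overline I$ whose width is comparable to $1/\max\{\gamma_b,\gamma_c,\gamma_f\}$; since $c\ge\gamma>0$ on $\overline I$ by (\ref{data}), after shrinking $G$ I may assume $|c(z)|\ge\gamma/2$ there, so that $1/c$, $b/c$ and $f/c$ are holomorphic and bounded on $G$. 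I abbreviate $C_b=\Vert b\Vert_{\infty,G}$, $C_c=\Vert 1/c\Vert_{\infty,G}$, $C_{b/c}=\Vert b/c\Vert_{\infty,G}$.

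I would then argue by induction on the first index $i$, uniformly in $j$. This is legitimate because each right--hand side in (\ref{c1smooth}) involves only terms $u_{i-1,\cdot}$ and $u_{i-2,\cdot}$ with \emph{strictly smaller} first index; in particular the coupling $u_{i,j}\leftrightarrow u_{i-2,j-1}$ across different $j$--levels causes no difficulty, and the support restriction $u_{i,j}\equiv0$ for $i<2j$ is not needed. The base level $i=0$ is immediate, since only $u_{0,0}=f/c$ is nonzero and $\Vert f/c\Vert_{\infty,G}\le C$. The engine of the inductive step is Cauchy's estimate on shrinking disks: if $z\in G_\delta$ and $0<r<\delta$, then a disk of radius $r$ about $z$ stays in $G_{\delta-r}$, so a level--$m$ bound $|g|\le C(\delta-r)^{-m}K^{m}m^{m}$ on $G_{\delta-r}$ yields $|g'(z)|\le r^{-1}C(\delta-r)^{-m}K^{m}m^{m}$ and $|g''(z)|\le 2r^{-2}C(\delta-r)^{-m}K^{m}m^{m}$. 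Choosing $r=\delta/i$ and invoking the elementary inequalities $(1-1/i)^{-(i-1)}\le e$, $i\,(i-1)^{i-1}\le i^{i}$ and $i^{2}(i-2)^{i-2}\le i^{i}$, the derivative losses are absorbed into the prefactor $i^{i}$ at the cost of a factor $e$ and a drop of one or two powers of $K$.

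Feeding these bounds into (\ref{c1smooth}) then closes the induction. For $j=0$, the relation $u_{i,0}=-(b/c)u_{i-1,0}'$ gives $|u_{i,0}(z)|\le C_{b/c}\,e\,C\,K^{i-1}\delta^{-i}i^{i}$, which is $\le CK^{i}\delta^{-i}i^{i}$ as soon as $K\ge eC_{b/c}$. For $i\ge2,\ j\ge1$, the relation $u_{i,j}=c^{-1}(u_{i-2,j-1}''-b\,u_{i-1,j}')$ gives
\begin{equation*}
|u_{i,j}(z)|\le e\,C_c\,C\,(2+C_bK)\,K^{i-2}\delta^{-i}i^{i},
\end{equation*}
which is $\le CK^{i}\delta^{-i}i^{i}$ once $K$ is chosen large enough that $eC_c(2/K^2+C_b/K)\le1$; the remaining cases $u_{0,j}=u_{1,j}=0$ for $j\ge1$ are trivial. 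Thus $C$ is fixed by the base level and $K$ by the step, and both are independent of $\varepsilon_1,\varepsilon_2$.

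The only genuinely delicate point — and the one I would write out carefully — is the bookkeeping in the Cauchy step: one must check that the optimized radius really converts the factorial--type derivative losses into the clean prefactor $i^{i}$, and that the degenerate choice $r=\delta/i$ at $i=1$ is avoided. I would therefore treat $i=0,1$ separately, where the relevant lower--level term is $u_{0,0}=f/c$, bounded on all of $G$ independently of $\delta$, so that any radius $r=\theta\delta$ with $\theta<1$ suffices, and run the optimization $r=\delta/i$ only for $i\ge2$. Everything else is routine substitution and collection of constants.
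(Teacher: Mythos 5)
Your proposal is correct and essentially reproduces the paper's own argument: the paper likewise proves the ``movable $\delta$'' form of the bound by induction on $i$ (uniformly in $j$), applies the inductive hypothesis on the slightly larger set $G_{(1-\kappa)\delta}\supset G_\delta$, uses Cauchy's integral theorem for derivatives with the optimized radius $\kappa=1/(i+1)$ (your $r=\delta/i$), and then fixes $K$ so that the resulting bracket $\left[\tfrac{2}{K^2}+\tfrac{1}{K}\right]$ is at most $1$. Your explicit separate treatment of $i=0,1$ and of the constants $C_b$, $C_c$, $C_{b/c}$ is just a more careful write-up of the same bookkeeping.
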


\begin{proof} The proof is by induction on $i$. The case $i=0$ holds
trivially, so assume the result holds for $i$ and establish it for $i+1$.
Let $\kappa \in (0,1)$ and let $K>0$ be a constant so that $\left[ \frac{2}{%
K^2} + \frac{1}{K}\right] \leq 1$. We have by (\ref{c1smooth}), the
induction hypothesis with $G_{(1-\kappa )\delta} \supset G_{\delta}$, and
Cauchy's Integral Theorem for Derivatives (we take as contour a circle of radius
$\kappa \delta$ about $z_0 \in G_{\delta}$),

\begin{eqnarray*}
\left\vert u_{i+1,j}(z)\right\vert &\leq & C \left\{ \left\vert u_{i-1,j-1}^{\prime
\prime }(z)\right\vert +\left\vert u_{i,j}^{\prime }(z)\right\vert \right\} \\
&\leq & C \left\{ \frac{2}{(\kappa \delta )^{2}}\left( (1-\kappa )\delta
\right) ^{-i+1}K^{i-1}(i-1)^{i-1}+\frac{1}{(\kappa \delta )}\left( (1-\kappa
)\delta \right) ^{-i}K^{i}i^{i}\right\} \\
&\leq &C \delta ^{-i-1}K^{i+1}(i+1)^{i+1}\left\{ \frac{1}{K^{2}}\frac{1}{%
(i+1)^{2}}\frac{2}{\kappa ^{2}(1-\kappa )^{i-1}}\left( \frac{i-1}{i+1}%
\right) ^{i-1}+\right. \\
&&+\left. \frac{1}{K}\frac{1}{(i+1)}\frac{1}{\kappa (1-\kappa )^{i}}\left( 
\frac{i}{i+1}\right) ^{i}\right\} .
\end{eqnarray*}%
Choose $\kappa =1/(i+1).$ Then we get%
\begin{equation*}
\left\vert u_{i+1,j}(z)\right\vert \leq C \delta ^{-i-1}K^{i+1}(i+1)^{i+1} 
\left[ \frac{2}{K^{2}}+\frac{1}{K}\right] ,
\end{equation*}%
so by the choice of $K$ the expression in brackets is bounded by 1 and this
completes the proof.

\end{proof}

\begin{lemma}
\label{lemma_c1smooth2}Let $u_{i,j}$ be defined by (\ref{c1smooth}) and
assume (\ref{analytic}) holds. Then there exist positive constants $%
K_{1},K_{2}$ such that 
\begin{equation*}
\Vert u_{i,j}^{(n)}\Vert _{\infty ,I}\lesssim
n!K_{1}^{n}i!K_{2}^{i}\,\,\forall \,\,n\in \mathbb{N}.
\end{equation*}
\end{lemma}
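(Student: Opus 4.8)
The plan is to convert the complex-analytic growth estimate of Lemma~\ref{lemma_c1smooth} into the claimed real-variable derivative bound by means of Cauchy's integral formula, choosing the radius of the integration contour as a function of \emph{both} $n$ and $i$ so as to balance the two competing sources of growth. A preliminary observation is that the estimate of Lemma~\ref{lemma_c1smooth} is uniform in $j$; this is precisely what will make the final bound independent of $j$. (Structurally $u_{i,j}\equiv 0$ unless $i\ge 2j$, so $j$ is already slaved to $i$.) Accordingly, throughout the argument $j$ plays no role and may be carried along as a silent parameter.

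I would first fix the complex neighborhood $G$ of $\overline I$ provided by Lemma~\ref{lemma_c1smooth} and set $d_0=\operatorname{dist}(\overline I,\partial G)>0$. For $x\in\overline I$, $n\ge 1$ and any radius $0<r<d_0$, Cauchy's integral formula on the circle $|z-x|=r$ gives
\[
\bigl|u_{i,j}^{(n)}(x)\bigr|\le \frac{n!}{r^{\,n}}\max_{|z-x|=r}\bigl|u_{i,j}(z)\bigr|.
\]
Every $z$ on this circle satisfies $\operatorname{dist}(z,\partial G)\ge d_0-r$, so Lemma~\ref{lemma_c1smooth} applied with $\delta=d_0-r$ yields $\max_{|z-x|=r}|u_{i,j}(z)|\le C\,(d_0-r)^{-i}K^{i}i^{i}$, whence
\[
\bigl|u_{i,j}^{(n)}(x)\bigr|\le C\,n!\,r^{-n}(d_0-r)^{-i}K^{i}i^{i}.
\]

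The decisive step is the choice of $r$. Taking $r=\tfrac{n}{\,n+i\,}\,d_0$ (for $n,i\ge 1$) gives $r^{-n}=\bigl(\tfrac{n+i}{n}\bigr)^{n}d_0^{-n}$ and $(d_0-r)^{-i}=\bigl(\tfrac{n+i}{i}\bigr)^{i}d_0^{-i}$. Using the elementary inequalities $\bigl(1+\tfrac{i}{n}\bigr)^{n}\le e^{i}$ and $\bigl(1+\tfrac{n}{i}\bigr)^{i}\le e^{n}$, the two ``bad'' factors collapse to the merely geometric $r^{-n}(d_0-r)^{-i}\le e^{\,n+i}d_0^{-n-i}$, and then $i^{i}\le e^{i}i!$ trades the superfactorial growth of Lemma~\ref{lemma_c1smooth} for genuine factorial growth. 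Collecting the constants,
\[
\bigl|u_{i,j}^{(n)}(x)\bigr|\le C\,n!\,\Bigl(\tfrac{e}{d_0}\Bigr)^{n}\,i!\,\Bigl(\tfrac{e^{2}K}{d_0}\Bigr)^{i},
\]
which is the assertion with $K_1=e/d_0$ and $K_2=e^{2}K/d_0$. The remaining cases $n=0$ and $i=0$ follow directly from Lemma~\ref{lemma_c1smooth} with a fixed $\delta<d_0$, since then $|u_{i,j}(x)|\le C\delta^{-i}K^{i}i^{i}\le C\,i!\,(eK/\delta)^{i}$.

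I expect the only real difficulty to be the bookkeeping in this optimization: one must recognize that the $n$-dependent radius $r=\tfrac{n}{n+i}d_0$, rather than a fixed radius, is what cleanly decouples the $n$- and $i$-dependence. A fixed contour would leave $\|u_{i,j}^{(n)}\|_{\infty,I}\lesssim n!K_1^{n}\,(\mathrm{const})^{i}\,i^{i}$, i.e.\ the unwanted $i^{i}$ instead of $i!$; the balanced contour is exactly the device that converts one $i$-power into the admissible geometric factor $e^{i}$. All other ingredients are the standard Cauchy-estimate machinery.
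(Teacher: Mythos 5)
Your proof is correct and takes essentially the same route as the paper: both convert the complex-neighborhood bound of Lemma \ref{lemma_c1smooth} into real derivative estimates on $\overline{I}$ via Cauchy's integral formula for derivatives, trading the $i^{i}$ growth for $i!\,(\mathrm{const})^{i}$. The paper compresses this into a single line, while your balanced contour radius $r=\frac{n}{n+i}\,d_0$ spells out carefully how the contour stays inside $G$ and how the $n$- and $i$-dependence decouple, which is exactly the standard machinery the paper invokes implicitly.
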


\begin{proof} This follows immediately from Lemma \ref{lemma_c1smooth} and
Cauchy's Integral Theorem for derivatives: 
\begin{equation*}
\Vert u_{i,j}^{(n)}\Vert _{\infty ,I}\lesssim \frac{n!}{(n+1)^{n}}\delta
^{-i}K^{i}i^{i}e^{n}\lesssim n!K_{1}^{n}i!K_{2}^{i},
\end{equation*}%
with $K_{1} = e, K_{2} = K/\delta$. \end{proof}

In order to treat the layer terms $\tilde{u}_{i,j}^{BL}, \hat{u}_{i,j}^{BL}$, we will develop some auxiliary results. The following one will be used in the proof of Lemma \ref{lem:claim}, and is an analog of Lemma 7.3.6 in \cite{melenk} (see also Proposition \ref{lemma_aux0} ahead).

\begin{lemma}
\label{lemma_aux} Let $\lambda,\gamma\in \mathbb{C}$ with $Re(\lambda) >0, Re(\gamma)>0$, and let
$\alpha_1, \alpha_2 \in \mathbb{R}^{+}$. Suppose $F$ is  an entire function satisfying, for some $C_F >0$, $i,j\in \mathbb{N}_{0},$ 
\begin{equation*}
\left\vert F(z)\right\vert \leq C_{F}\gamma^{i+j}e^{-Re(\lambda z)} \left(
\alpha_1 i+\alpha_2 j+\left\vert z\right\vert \right) ^{\alpha_1 i+\alpha_2 j}\;\;\forall \;z\in \mathbb{C},
\end{equation*}
and let $v_0 \in \mathbb{C}$. Then, the solution $v:(0,\infty )\rightarrow \mathbb{C}$, 
of the problem
\begin{equation*}
v^{\prime }+\lambda v=F\text{ on }(0,\infty )\;,\;v(0)=v_0,
\end{equation*}
can be extended to an entire function (denoted again by $v$), which
satisfies
\begin{equation*}
\left\vert v(z)\right\vert \leq \left[ \frac{C_{F}}{\vert \lambda \vert} \frac{\gamma^{i+j}}{(\alpha_1 i+ \alpha_2 j+1)}%
\left( \alpha_1 i+\alpha_2 j+\left\vert z\right\vert \right) ^{\alpha_1 i+\alpha_2 j+1}+\left\vert v_0
\right\vert \right] e^{-Re(\lambda z)}\;\;\forall \;z\in \mathbb{C}.
\end{equation*}
\end{lemma}

\begin{proof} Using an integrating factor we find 
\begin{equation*}
v(z)=e^{-\lambda_1 z}\left[ v_0+\int_{0}^{|z|}e^{\lambda s}F(s)ds\right] ,
\end{equation*}
from which we get 
\begin{eqnarray*}
\left\vert v(z)\right\vert &\leq &e^{-Re(\lambda z)}\left[ \left\vert
v_0 \right\vert +\int_{0}^{|z|}\left\vert e^{Re(\lambda s)} F(s)\right\vert ds\right] \\
&\lesssim &e^{-Re(\lambda z)}\left[ \left\vert v_0 \right\vert +%
\frac{C_{F}}{\vert \lambda \vert} \gamma^{i+j} \int_{0}^{|z|}\left\vert \left( \alpha_1 i+\alpha_2 j+\left\vert
s\right\vert \right) ^{\alpha_1 i+\alpha_2 j}\right\vert ds\right] ,
\end{eqnarray*}
where we used the assumption on $F$. The result follows. 
\end{proof}

\begin{lemma}\label{lem:claim}
The functions $\tilde{u}_{i,j}^{BL}$ which satisfy (\ref{c1BLa}), (\ref{c1BC}), are entire and there exist positive constants 
$C, \tilde{\gamma}$ such that
\begin{equation}
\left\vert \left( \tilde{u}_{i,j}^{BL}\right) (z)\right\vert \leq C  \frac{\tilde{\gamma}^{i+j}}{i!}
\left( 2i+j+\left\vert z\right\vert \right) ^{2i+j}e^{-\beta Re(z)}\;,z\in \mathbb{C} \; , \; Re(z)>0,  \label{claim}
\end{equation}
where $\beta = \tilde{c}_0 / \tilde{b}_0$.
\end{lemma}

\begin{proof} We recall that 
$\tilde{b}_{k}(\tilde{x})=\tilde{x}^{k}b^{(k)}(0)/k!$ and $\tilde{c}_{k}(\tilde{x})=\tilde{x}^{k}c^{(k)}(0)/k!$.  
Consequently, there exist positive constants $C_{\tilde{b}}, \gamma_{\tilde{b}}, C_{\tilde{c}}, \gamma_{\tilde{c}}$,
depending solely on $b,c$ such that
\begin{equation}
\label{coeff_fns}
\left\vert \tilde{b}_{k}(z) \right\vert \leq C_{\tilde{b}} \gamma_{\tilde{b}}^k \vert z \vert^k \;\;,\;\;
\left\vert \tilde{c}_{k}(z) \right\vert \leq C_{\tilde{c}} \gamma_{\tilde{c}}^k \vert z \vert^k .
\end{equation}
Then, with $K_2$ the constant from Lemma \ref{lemma_c1smooth2}, and 
$\gamma_{\tilde{b}}, \gamma_{\tilde{c}}$ given by (\ref{coeff_fns}), we choose 
$\tilde{\gamma} > \max\{ K_2, \gamma_{\tilde{b}}, \gamma_{\tilde{c}}\}$ so that
\begin{equation}
\label{gamma}
\left[ \frac{ \gamma_{\tilde{b}} / \tilde{\gamma} }{ 1 - \gamma_{\tilde{b}} / \tilde{\gamma}} 
+ \frac{ \gamma_{\tilde{c}} / \tilde{\gamma} }{ 1 - \gamma_{\tilde{c}} / \tilde{\gamma}}  \right] < 1.
\end{equation}

Next, we note that from (\ref{c1BLa}) we may calculate 
\begin{equation*}
\tilde{u}_{0,0}^{BL}(z)=-u_{0,0}(0)e^{-\frac{\tilde{c}_{0}}{\tilde{b}_{0}}z}.
\end{equation*}
Thus, using Lemma \ref{lemma_c1smooth2} to bound the term $\left\vert
u_{0,0}(0)\right\vert ,$ we get 
\begin{equation*}
\left\vert \tilde{u}_{0,0}^{BL}(z)\right\vert \leq C e^{-\left\vert \frac{%
\tilde{c}_{0}}{\tilde{b}_{0}}z\right\vert }\leq C e^{-\beta Re(z)}, \: \beta =\tilde{c}_{0}/\tilde{b}_{0},
\end{equation*}
thus the claim holds for $i,j=0$. 
For $j=0,i>0,$ we proceed with induction on $i$, while keeping $j$ fixed
at $0$. We have shown the desired result for the case $i=0$, so we assume it
holds for $i>0$ and we will establish it for $i+1$. The function $\tilde{u}_{i+1,0}^{BL}$ satisfies
\begin{equation*}
\tilde{b}_{0}\left( \tilde{u}_{i+1,0}^{BL}\right) ^{\prime }+\tilde{c}_{0}\tilde{u}_{i+1,0}^{BL}=
-\sum_{k=1}^{i+1}\left( \tilde{b}_{k}\left( \tilde{u}_{i+1-k,0}^{BL}\right) ^{\prime }+
\tilde{c}_{k}\tilde{u}_{i+1-k,0}^{BL}\right) =:G_{1},
\end{equation*}
as well as $\tilde{u}_{i+1,0}^{BL}(0)=-u_{i+1,0}(0)$. 
In order to use Lemma \ref{lemma_aux}, we bound the right hand side above as follows:
\begin{eqnarray*}
\left\vert G_1(z) \right\vert &\leq& \sum_{k=1}^{i+1} \left[ \left\vert \tilde{b}_k \right\vert \left\vert\left( \tilde{u}_{i+1-k,0}^{BL}\right) ^{\prime } \right\vert + 
\left\vert  \tilde{c}_k \right\vert \left\vert  \tilde{u}_{i+1-k,0}^{BL} \right\vert \right] \\
&\leq& C \sum_{k=1}^{i+1} \vert z \vert^k \left[  \left( \gamma_{\tilde{b}}^k + \gamma_{\tilde{c}}^k \right) \left\vert  \tilde{u}_{i+1-k,0}^{BL} \right\vert \right],
\end{eqnarray*}
where we used (\ref{coeff_fns}). Cauchy's Integral Theorem for Derivatives and the induction hypothesis yield
\begin{eqnarray*}
\left\vert G_1(z)\right\vert &\leq& 
C \sum_{k=1}^{i+1} e^{-\beta Re(z)} |z|^k \left( \gamma^k_{\tilde{b}} + \gamma^k_{\tilde{c}} \right)\frac{\tilde{\gamma}^{i+1-k}}{(i+1-k)!} \left(2(i+1-k)+|z|\right)^{2(i+1-k)}  
\\
&\leq& C e^{-\beta Re(z)}  \frac{ \tilde{\gamma}^{i+1}}{i!} (2i+|z|)^{2i+1} \sum_{k=1}^{\infty} \left[ \left(
 \frac{\gamma_{\tilde{b}}}{\tilde{\gamma}}\right)^k +  \left(
 \frac{\gamma_{\tilde{c}}}{\tilde{\gamma}}\right)^k \right] \\
&\leq& C e^{-\beta Re(z)}  \frac{ \tilde{\gamma}^{i+1}}{i!} (2i+|z|)^{2i+1},
\end{eqnarray*}
since the geometric series converges to a quantity bounded by 1, by the choice of $\tilde{\gamma}$, see eq. (\ref{gamma}).
Then, Lemma \ref{lemma_aux} yields
\[
\left\vert \tilde{u}_{i+1,0}^{BL} (z) \right\vert \leq C \tilde{\gamma}^{i+1} e^{-\beta Re(z)} 
\left( \frac{ (2( i + 1) +|z| )^{2i+2}}{(i+1)!} + \frac{\vert u_{i+1,0}(0) \vert}{\tilde{\gamma}^{i+1}} \right).
\]
Lemma \ref{lemma_c1smooth2}, the choice of $\tilde{\gamma}$, and Stirling's formula, further give
\begin{eqnarray*}
\left\vert \tilde{u}_{i+1,0}^{BL} (z) \right\vert &\leq& C \tilde{\gamma}^{i+1} e^{-\beta Re(z)} 
\left( \frac{( 2(i+1)+|z| )^{2(i+1)}}{(i+1)!} + (i+1)^{i+1}  \right) \\
&\leq& C \tilde{\gamma}^{i+1} e^{-\beta Re(z)} \frac{( 2(i+1)+|z| )^{2(i+1)} } {(i+1)!} 
\left[ 1 + \frac{ (i+1)^{2(i+1)} }{(2(i+1)+|z| )^{2(i+1)}}  \right] \\ 
&\leq& C \tilde{\gamma}^{i+1} e^{-\beta Re(z)} \frac{( 2(i+1)+|z| )^{2(i+1)} } {(i+1)!} .
\end{eqnarray*}
This completes the induction on $i>0$ (with $j=0$).

We next consider the case $i=0,j>0$. Assuming
\[
\left\vert \tilde{u}_{0,j}^{BL}(z)\right\vert \leq C \tilde{\gamma}^j (j+|z|)^{j} e^{-\beta Re(z)},
\]
we will establish it for $j+1$. The function $ \tilde{u}_{0,j+1}^{BL}(z)$ satisfies for $j>0$,
\begin{equation*}
\tilde{b}_0\left(\tilde{u}_{0, j+1}^{BL}\right)^{\prime}+\tilde{c}_0 \tilde{u}_{0, j+1}^{B L}=\left(\tilde{u}_{0, j}^{B L}\right)^{\prime \prime}.
\end{equation*}
By Cauchy's Integral Theorem for Derivatives and the induction hypothesis, we have
\begin{equation*}
\left\vert \left( \tilde{u}_{0,j}^{BL}(z) \right)'' \right\vert \leq C \tilde{\gamma}^j (j+|z|)^j e^{-\beta Re(z)},
\end{equation*}
and by Lemmata \ref{lemma_c1smooth2}, \ref{lemma_aux},
\begin{eqnarray*}
\left\vert \left( \tilde{u}_{0,j+1}^{BL}(z) \right) \right\vert &\leq& C \tilde{\gamma}^{j} e^{-\beta Re(z)}
\left\{  \frac{(j+|z|)^{j+1}}{j+1} + \frac{\vert u_{0,j+1} \vert}{\tilde{\gamma}^j} \right\} \\
&\leq& C \tilde{\gamma}^{j+1} e^{-\beta Re(z)} (j+1+|z|)^{j+1}  \\
&\times& \left\{  \frac{(j+|z|)^{j+1}}{\tilde{\gamma}(j+1) (j+1+|z|)^{j+1}} + \frac{\tilde{C}}{\tilde{\gamma}^{j+1} (j+1+|z|)^{j+1}} \right\} \\
&\leq& C \tilde{\gamma}^{j+1} e^{-\beta Re(z)} (j+1+|z|)^{j+1}.
\end{eqnarray*}
This establishes the result for $i=0,j>0$.

We finally show the case $i,j>0$. We
perform induction on $i>0$, while keeping $j$ fixed (but arbitrary). We assume (\ref{claim}) holds for $i\geq 1$ 
and show it for $i+1.$ We note that by (\ref{c1BLa}), $\tilde{u}_{i+1,j}^{BL}$ satisfies 
\begin{equation*}
\tilde{b}_{0}\left( \tilde{u}_{i+1,j}^{BL}\right) ^{\prime }+\tilde{c}_{0}%
\tilde{u}_{i+1,j}^{BL}=\left( \tilde{u}_{i+1,j-1}^{BL}\right) ^{\prime \prime
}-\sum_{k=1}^{i+1}\left( \tilde{b}_{k}\left( \tilde{u}_{i+1-k,j}^{BL}\right)
^{\prime }+\tilde{c}_{k}\tilde{u}_{i+1-k,j}^{BL}\right) =:G_{2}\;,\;
\end{equation*}%
as well as $\tilde{u}_{i+1,j}^{BL}(0)=-u_{i+1,j}(0)$. 
We bound $G_2$ using Cauchy's Integral Theorem for Derivatives, (\ref{gamma}), and the induction hypothesis:
\begin{eqnarray*}
\left\vert G_2 \right\vert &\leq& \left\vert \left( \tilde{u}_{i+1,j-1}^{BL}\right) ^{\prime \prime} \right\vert +
\sum_{k=1}^{i+1} \left\{ \left\vert \tilde{b}_{k} \right\vert  \left\vert \left( \tilde{u}_{i+1-k,j}^{BL}\right)
^{\prime } \right\vert + \left\vert \tilde{c}_{k} \right\vert \left\vert  \tilde{u}_{i+1-k,j}^{BL} \right\vert \right\} \\
&\leq& C e^{-\beta Re(z)} \tilde{\gamma}^{i+j} \frac{(2(i+1)+j-1+|z|)^{2(i+1)+j-1}}{(i+1)!} + \\
&+& C e^{-\beta Re(z)} \tilde{\gamma}^{i+1+j}  \sum_{k=1}^{i+1} \frac{\left(2(i+1-k)+j+|z|\right)^{2(i+1)-k+j}}{(i+1-k)!} \left[ \left(
 \frac{\gamma_{\tilde{b}}}{\tilde{\gamma}}\right)^k +  \left(
 \frac{\gamma_{\tilde{c}}}{\tilde{\gamma}}\right)^k \right]   \\
&\leq& C e^{-\beta Re(z)} \tilde{\gamma}^{i+j+1} \frac{(2i+1+j+|z|)^{2i+1+j}}{i!},
\end{eqnarray*}
where we argued in a similar fashion as we did for $G_1$. Lemmata \ref{lemma_c1smooth2}, \ref{lemma_aux} give
\begin{eqnarray*}
\left\vert  \tilde{u}_{i+1,j}^{BL} \right\vert &\leq& C e^{-\beta Re(z)} \tilde{\gamma}^{i+1+j} \left\{
 \frac{(2(i+1)+j+|z|)^{2i+2+j}}{(i+1)!} + \frac{\vert u_{i+1,j}(0) \vert}{\tilde{\gamma}^{i+1+j}} \right\} \\
&\leq& C e^{-\beta Re(z)} \tilde{\gamma}^{i+1+j} \left\{
 \frac{(2(i+1)+j+|z|)^{2(i+1)+j}}{(i+1)!} + \frac{(i+1)^{i+1} }{\tilde{\gamma}^{j}}\right\} \\
&\leq& C  e^{-\beta Re(z)} \tilde{\gamma}^{i+1+j}   \frac{(2(i+1)+j+|z|)^{2(i+1)+j}}{(i+1)!}.
\end{eqnarray*}
This completes the proof.
\end{proof}

For the other layer term $\hat{u}_{i,j}^{BL}$, we have a similar result.

\begin{lemma}\label{lem:claim2}
The functions $\hat{u}_{i,j}^{BL}$ which satisfy (\ref{c1BLb}), (\ref{c1BC}), are entire and there exist positive constants 
$C, \hat{\gamma}$ such that \begin{equation}
\left\vert \left( \hat{u}_{i,j}^{BL}\right) (z)\right\vert \leq C \frac{\hat{\gamma}^{i+j}}{j!}
\left( i+2j+\left\vert z\right\vert \right) ^{i+2j}e^{-\beta Re(z)}\;,z\in \mathbb{C},Re(z)>0  \label{claim2}
\end{equation}
where $\beta = \hat{b}_0$.
\end{lemma}

\begin{proof} The proof is very similar to that of Lemma \ref{lem:claim}, utilizing
Lemmata \ref{lemma_c1smooth2}, \ref{lemma_aux}, and Cauchy's Integral Theorem for Derivatives.
The details appear in \cite{Irene}.
\end{proof}

Using the previous two results, we obtain the following.

\begin{lemma}
\label{lemma_c1BL1} Let the functions $\tilde{u}_{i,j}^{BL},\hat{u}_{i,j}^{BL}$
satisfy (\ref{c1BLa}),(\ref{c1BLb}) respectively. Then, there
exist positive constants $\tilde{C}, \hat{C}, \tilde{K},\hat{K},\tilde{\gamma},\hat{\gamma}$,
depending only on the data, such that $\forall \,\,n\in \mathbb{N},$ 
\begin{equation}
\left\vert \left( \tilde{u}_{i,j}^{BL}\right) ^{(n)}(z)\right\vert \leq \tilde{C} 
\tilde{K}^{n}\tilde{\gamma}^{i+j}\frac{(2i+j)^{2i+j}}{i!} e^{-\tilde{\beta} Re(z)}\;,z\in \mathbb{C%
},Re(z)>0,  \label{Lc1BLa}
\end{equation}%
\begin{equation}
\left\vert \left( \hat{u}_{i,j}^{BL}\right) ^{(n)}(z)\right\vert \leq \hat{C}
\hat{K}^{n}\hat{\gamma}^{i+j}\frac{(i+2j)^{i+2j}}{j!} e^{-\hat{\beta} Re(z)}\;,z\in \mathbb{C}%
,Re(z)>0.  \label{Lc1BLb}
\end{equation}
where $\tilde{\beta} =\tilde{c}_{0}/\tilde{b}_{0}, \hat{\beta} =\hat{b}_{0}$.
\end{lemma}

\begin{proof} We will prove (\ref{Lc1BLa}), since (\ref{Lc1BLb}) is similar.
Cauchy's Integral Theorem for Derivatives allows us
to infer (\ref{Lc1BLa}) from (\ref{claim}) as follows: 
\begin{eqnarray*}
\left\vert \left( \tilde{u}_{i,j}^{BL}\right) ^{(n)}(z)\right\vert 
&\leq & \tilde{C} e^{-\tilde{\beta} Re(z)}\frac{n!}{(n+1)^{n}}\tilde{\gamma}%
^{i+j} \frac{\left( 2i+j+\left\vert z\right\vert \right) ^{2i+j}}{i!}e^{n} \\
&\leq & \tilde{C} e^{-\tilde{\beta} Re(z)}\frac{n!}{(n+1)^{n}}\tilde{\gamma}%
^{i+j} \frac{\left( 2i+j+n \right) ^{2i+j}}{i!}e^{n}.
\end{eqnarray*}%
Observing that
\begin{equation}
\left(2 i+j+n\right) ^{2i+j} = \left(2 i+j\right) ^{2i+j}\left(
1+n/(2i+j)\right) ^{2i+j}\leq \left( 2i+j\right) ^{2i+j}e^{n},  \label{ejei}
\end{equation}%
the result follows.
\end{proof}

We now define, for some $M\in \mathbb{N}$,
\begin{eqnarray}
u_{M}(x) &=&\sum_{i=0}^{M}\sum_{j=0}^{M}\varepsilon _{2}^{i}(\varepsilon
_{1}/\varepsilon _{2}^{2})^{j}u_{i,j}(x),  \label{uM1} \\
\tilde{u}_{M}^{BL}(\tilde{x}) &=&\sum_{i=0}^{M}\sum_{j=0}^{M}\varepsilon
_{2}^{i}(\varepsilon _{1}/\varepsilon _{2}^{2})^{j}\tilde{u}_{i,j}^{BL}(%
\tilde{x}),  \label{uBL1a} \\
\hat{u}_{M}^{BL}(\hat{x}) &=&\sum_{i=0}^{M}\sum_{j=0}^{M}\varepsilon
_{2}^{i}(\varepsilon _{1}/\varepsilon _{2}^{2})^{j}\hat{u}_{i,j}^{BL}(\hat{x}%
),  \label{uBL1b} \\
r_{M} &=&u-\left( u_{M}+\tilde{u}_{M}^{BL}+\hat{u}_{M}^{BL}\right)
\label{rM1}
\end{eqnarray}%
and we have the following decomposition 
\begin{equation}
u=u_{M}+\tilde{u}_{M}^{BL}+\hat{u}_{M}^{BL}+r_{M}.  \label{decomp1}
\end{equation}

As the following theorem shows, the estimates on the smooth part $u_{M}$ in (%
\ref{decomp1}), explicitly show the dependence on the differentiation order.
Moreover, (\ref{uM1bound}) shows that the smooth part is (real) analytic,
hence a high order numerical method could produce exponential rates of
convergence (see, e.g. \cite{melenk}).

\begin{theorem}
\label{thm_reg1}Assume (\ref{analytic}), (\ref{data}) hold, and that 
$\varepsilon _{1} \ll \varepsilon_{2}^{2}$. Then there exist
positive constants $K_{1},K_2,\tilde{K},\hat{K},\tilde{\gamma},\hat{\gamma}$, 
independent of $\varepsilon _{1},\varepsilon _{2},$ such that the
solution $u $ of (\ref{de})--(\ref{bc}) can be decomposed as in (\ref%
{decomp1}), with 
\begin{equation}
\left\Vert u_{M}^{(n)}\right\Vert _{\infty ,I}\lesssim n!K_{1}^{n}\;\forall
\;n\in \mathbb{N}_{0},  \label{uM1bound}
\end{equation}
\begin{equation}
\left\vert \left( \tilde{u}_{M}^{BL}\right) ^{(n)}(x)\right\vert \lesssim 
\tilde{K}^{n}\varepsilon _{2}^{-n}e^{-\beta x/\varepsilon
_{2}}\;\forall \;n\in \mathbb{N}_{0},  \label{uBL1abound}
\end{equation}
\begin{equation}
\left\vert \left( \hat{u}_{M}^{BL}\right) ^{(n)}(x)\right\vert \lesssim \hat{%
K}^{n}\left( \frac{\varepsilon _{1}}{\varepsilon _{2}}\right)
^{-n}e^{-\beta(1-x)\varepsilon _{2}/\varepsilon _{1}}\;\forall
\;n\in \mathbb{N}_{0},  \label{uBL1bbound}
\end{equation}
\begin{equation}
\left\Vert r_{M}\right\Vert _{\infty ,\partial I}+\left\Vert
r_{M}\right\Vert _{0,I}+\varepsilon^{1/2}_{1}\left\Vert r_{M}^{\prime
}\right\Vert _{0,I}\lesssim \max \{e^{-\beta \varepsilon _{2}/\varepsilon
_{1}},e^{-\beta /\varepsilon _{2}}\},  \label{rM1bound}
\end{equation}
where $M$ is chosen so that $\varepsilon _{2} e^2 4 M\max \left\{1, K_{2},\tilde{\gamma},\hat{\gamma%
}\right\} <1$ and $\frac{\varepsilon _{1}}{\varepsilon _{2}^{2}}e^2 4 M\max \{1, 
\tilde{\gamma},\hat{\gamma}\}<1$. The constant $\beta$ is given by 
$\beta = \min \left\{ \frac{\tilde{c}_0}{\tilde{b}_0},  \hat{b}_0 \right\}$.
\end{theorem}
\begin{proof} We first show (\ref{uM1bound}): from (\ref{uM1}) and Lemma \ref%
{lemma_c1smooth2} we have%
\begin{eqnarray*}
\left\Vert u_{M}^{(n)}\right\Vert _{\infty ,I} &\leq
&\sum_{i=0}^{M}\sum_{j=0}^{M}\varepsilon _{2}^{i}(\varepsilon
_{1}/\varepsilon _{2}^{2})^{j}\left\Vert u_{i,j}^{(n)}\right\Vert _{\infty
,I}\lesssim \sum_{i=0}^{M}\sum_{j=0}^{M}\varepsilon _{2}^{i}(\varepsilon
_{1}/\varepsilon _{2}^{2})^{j}n!K_{1}^{n}i!K_{2}^{i} \\
&\lesssim &n!K_{1}^{n}\left( \sum_{i=0}^{M}\varepsilon
_{2}^{i}i^{i}K_{2}^{i}\right) \left( \sum_{j=0}^{M}(\varepsilon
_{1}/\varepsilon _{2}^{2})^{j}\right) \\
&\lesssim &n!K_{1}^{n}\left( \sum_{i=0}^{\infty }\left( \varepsilon
_{2}MK_{2}\right) ^{i}\right) \left( \sum_{j=0}^{\infty }(\varepsilon
_{1}/\varepsilon _{2}^{2})^{j}\right) \\
&\lesssim &n!K_{1}^{n},
\end{eqnarray*}%
since both sums are convergent geometric series due to the assumptions
 $\varepsilon _{2}MK_{2}<1$ and $\varepsilon _{1}/\varepsilon _{2}^{2}<1.$

Next we show (\ref{uBL1abound}): By (\ref{uBL1a}) and Lemma \ref{lemma_c1BL1}, we have
\begin{eqnarray*}
\left\vert \left( \tilde{u}_{M}^{BL}\right) ^{(n)}(\tilde{x})\right\vert
&\leq& \sum_{i=0}^{M}\sum_{j=0}^{M}\varepsilon _{2}^{i}(\varepsilon
_{1}/\varepsilon _{2}^{2})^{j}\left\vert \left( \tilde{u}_{i,j}^{BL}\right)
^{(n)}(\tilde{x})\right\vert \\
&\lesssim& \sum_{i=0}^{M}\sum_{j=0}^{M}\varepsilon _{2}^{i}(\varepsilon
_{1}/\varepsilon _{2}^{2})^{j}\tilde{K}^{n}\tilde{\gamma}%
^{i+j}\frac{(2i+j)^{2i+j}}{i!} e^{- \beta \tilde{x} }.
\end{eqnarray*}
Since $(2i+j)^{2i+j}\leq e^{2i}(2i)^{2i}e^{j}j^{j}$ (cf. (\ref{ejei})), we get 
\begin{eqnarray*}
\left\vert \left( \tilde{u}_{M}^{BL}\right) ^{(n)}(\tilde{x})\right\vert
&\lesssim &\tilde{K}^{n}e^{-\beta \tilde{x}}\left( \sum_{i=0}^{M} \frac{\tilde{%
\gamma}^{i}e^{2i}(2i)^{2i}\varepsilon _{2}^{i} }{i!} \right) \left(
\sum_{j=0}^{M}(\varepsilon _{1}/\varepsilon _{2}^{2})^{j} \tilde{\gamma}^j
e^{j}j^{j}\right) \\
&\lesssim &\tilde{K}^{n}e^{-\beta \tilde{x}}\left( \sum_{i=0}^{\infty
}\left( \tilde{\gamma}e^2 4 M\varepsilon _{2}\right) ^{i}\right) \left(
\sum_{j=0}^{\infty }\left( \frac{\varepsilon _{1}}{\varepsilon _{2}^{2}}%
\tilde{\gamma} e M\right) ^{j}\right) \\
&\lesssim &\tilde{K}^{n}e^{-\beta \tilde{x}},
\end{eqnarray*}%
since both sums are convergent geometric series due to the assumptions $%
4 \tilde{\gamma}e^2M\varepsilon _{2}<1$, $\frac{\varepsilon _{1}}{%
\varepsilon_{2}^{2}} \tilde{\gamma} e^2 4 M<1.$

Similarly, we show (\ref{uBL1bbound}): By (\ref{uBL1b}) and Lemma \ref%
{lemma_c1BL1} 
\begin{eqnarray*}
\left\vert \left( \hat{u}_{M}^{BL}\right)^{(n)}(\hat{x})\right\vert &\leq
&\sum_{i=0}^{M}\sum_{j=0}^{M}\varepsilon _{2}^{i}(\varepsilon
_{1}/\varepsilon _{2}^{2})^{j}\left\vert \left( \hat{u}_{i,j}^{BL}\right)
^{(n)}(\hat{x})\right\vert \\
&\lesssim &\sum_{i=0}^{M}\sum_{j=0}^{M}\varepsilon _{2}^{i}(\varepsilon
_{1}/\varepsilon _{2}^{2})^{j}\hat{K}^{n}\hat{\gamma}^{i+j}\frac{(i+2j)^{i+2j}}{j!}e^{-\beta \hat{x}} \\
&\lesssim &\hat{K}^{n}e^{-\beta \hat{x}}\left( \sum_{i=0}^{\infty }\left( 
\hat{\gamma}e M\varepsilon _{2}\right) ^{i}\right) \left( \sum_{j=0}^{\infty
}\left( \frac{\varepsilon _{1}}{\varepsilon _{2}^{2}} \hat{\gamma} e^2 4 M\right) ^{j}\right) \\
&\lesssim &\hat{K}^{n}e^{-\beta \hat{x}}.
\end{eqnarray*}
It remains to show (\ref{rM1bound}). To this end, note that 
\begin{eqnarray*}
r_{M}(0) &=&u(0)-\left[ \sum_{i=0}^{M}\sum_{j=0}^{M}\varepsilon
_{2}^{i}(\varepsilon _{1}/\varepsilon _{2}^{2})^{j}\left( u_{i,j}(0)+\tilde{u%
}_{i,j}^{BL}(0)+\hat{u}_{i,j}^{BL}(\varepsilon _{2}/\varepsilon _{1})\right) %
\right] \\
&=&-\sum_{i=0}^{M}\sum_{j=0}^{M}\varepsilon _{2}^{i}(\varepsilon
_{1}/\varepsilon _{2}^{2})^{j}\hat{u}_{i,j}^{BL}(\varepsilon
_{2}/\varepsilon _{1}).
\end{eqnarray*}
By (\ref{Lc1BLb}), 
\begin{eqnarray*}
\left\vert r_{M}(0)\right\vert &\leq
&\sum_{i=0}^{M}\sum_{j=0}^{M}\varepsilon _{2}^{i}(\varepsilon
_{1}/\varepsilon _{2}^{2})^{j}\left\vert \hat{u}_{i,j}^{BL}(\varepsilon
_{2}/\varepsilon _{1})\right\vert \\
&\lesssim& \sum_{i=0}^{M}\sum_{j=0}^{M}\varepsilon _{2}^{i}(\varepsilon
_{1}/\varepsilon _{2}^{2})^{j}\hat{\gamma}^{i+j}\frac{(i+2j)^{i+2j}}{j!} e^{-\beta
\varepsilon _{2}/\varepsilon _{1}}\; \\
&\lesssim &e^{-\beta \varepsilon _{2}/\varepsilon _{1}}\left(
\sum_{i=0}^{\infty }\left( \hat{\gamma}Me\varepsilon _{2}\right) ^{i}\right)
\left( \sum_{j=0}^{\infty }\left( (\varepsilon _{1}/\varepsilon _{2}^{2})%
\hat{\gamma} e^2 4 M \right) ^{j}\right) \\
&\lesssim &e^{-\beta \varepsilon _{2}/\varepsilon _{1}}.
\end{eqnarray*}
Similarly, 
\begin{eqnarray*}
\left\vert r_{M}(1)\right\vert &\leq
&\sum_{i=0}^{M}\sum_{j=0}^{M}\varepsilon _{2}^{i}(\varepsilon
_{1}/\varepsilon _{2}^{2})^{j}\left\vert \tilde{u}_{i,j}^{BL}(1/\varepsilon
_{2})\right\vert \\
&\lesssim& \sum_{i=0}^{M}\sum_{j=0}^{M}\varepsilon
_{2}^{i}(\varepsilon _{1}/\varepsilon _{2}^{2})^{j}\tilde{\gamma}%
^{i+j}\frac{(2i+j)^{2i+j}}{i!} e^{-\beta /\varepsilon _{2}}\; \\
&\lesssim &e^{-\beta /\varepsilon _{2}}\left( \sum_{i=0}^{\infty }(\tilde{%
\gamma}e^2\varepsilon _{2} 4 M)^{i}\right) \left( \sum_{j=0}^{\infty }\left(
(\varepsilon _{1}/\varepsilon _{2}^{2}) \tilde{\gamma} e M\right) ^{j}\right)
\\
&\lesssim &e^{-\beta /\varepsilon _{2}}.
\end{eqnarray*}
Combining the two results, we have 
\begin{equation*}
\left\Vert r_{M}\right\Vert _{\infty ,\partial I}\lesssim \max \{e^{-\beta
\varepsilon _{2}/\varepsilon _{1}},e^{-\beta /\varepsilon _{2}}\}.
\end{equation*}%
Now, let $L:=-\varepsilon _{1}\frac{d^{2}}{dx^{2}}+\varepsilon _{2}b\frac{d}{%
dx}+c$ Id, with Id the identity operator, and consider%
\begin{equation*}
L\left( u-u_{M}\right) =f(x)-\sum_{i=0}^{M}\sum_{j=0}^{M}\varepsilon
_{2}^{i}(\varepsilon _{1}/\varepsilon _{2}^{2})^{j}Lu_{i,j}(x),
\end{equation*}%
with $u_{i,j}$ satisfying (\ref{c1smooth}). After some calculations, we find 
\begin{equation*}
L\left( u-u_{M}\right) =-\varepsilon _{2}^{M+1}\sum_{j=1}^{M}\left( \frac{%
\varepsilon _{1}}{\varepsilon _{2}^{2}}\right) ^{j}bu_{M,j}^{\prime },
\end{equation*}
hence
\begin{eqnarray*}
\left\Vert L\left( u-u_{M}\right) \right\Vert _{\infty ,I}&\leq& \varepsilon
_{2}^{M+1}\sum_{j=1}^{M}\left( \frac{\varepsilon _{1}}{\varepsilon _{2}^{2}}%
\right) ^{j}\left\Vert b\right\Vert _{\infty ,I}\left\Vert u_{M,j}^{\prime
}\right\Vert _{\infty ,I} \\
&\lesssim& \varepsilon _{2}^{M+1}\sum_{j=1}^{M}\left( 
\frac{\varepsilon _{1}}{\varepsilon _{2}^{2}}\right) ^{j}\left\Vert
u_{M,j}^{\prime }\right\Vert _{\infty ,I}.
\end{eqnarray*}
Using Lemma \ref{lemma_c1smooth2}, we further obtain%
\begin{equation*}
\left\Vert L\left( u-u_{M}\right) \right\Vert _{\infty ,I}\lesssim
\varepsilon _{2}^{M+1}M!K_{2}^{M}\sum_{j=1}^{M}\left( \frac{\varepsilon _{1}%
}{\varepsilon _{2}^{2}}\right) ^{j}\lesssim \varepsilon _{2}\left(
\varepsilon _{2}MK_{2}\right) ^{M},
\end{equation*}%
since the finite sum can be bounded by a converging geometric series.

We also consider the operator $L$ in the stretched variable $\tilde{x}$, and
we find, after some calculations, 
\begin{eqnarray*}
\tilde{L}\tilde{u}_{M}^{BL} &=&\sum_{i=0}^{M}\sum_{j=0}^{M}\varepsilon
_{2}^{i}(\varepsilon _{1}/\varepsilon _{2}^{2})^{j}\tilde{L}\tilde{u}%
_{i,j}^{BL} \\
&=&\sum_{i=0}^{M}\sum_{j=0}^{M}\varepsilon _{2}^{i}(\varepsilon
_{1}/\varepsilon _{2}^{2})^{j}\left\{ -\varepsilon _{1}\varepsilon
_{2}^{-2}\left( \tilde{u}_{i,j}^{BL}\right) ^{\prime \prime }+\sum_{k=0}^{M}%
\left[ \tilde{b}_{k}\left( \tilde{u}_{i,j}^{BL}\right) ^{\prime }+\tilde{c}%
_{k}\tilde{u}_{i,j}^{BL}\right] \right\}  \\
&=&\left( \frac{\varepsilon _{1}}{\varepsilon _{2}^{2}}\right)
^{M+1}\sum_{i=0}^{M}\varepsilon _{2}^{i}\left( \tilde{u}_{i,M}^{BL}\right)
^{\prime \prime },
\end{eqnarray*}
where (\ref{c1BLa}) was used. Hence, using (\ref{Lc1BLa}), we have 
\begin{eqnarray*}
\left\Vert \tilde{L}\tilde{u}_{M}^{BL}\right\Vert _{\infty ,I} &\leq &\left( 
\frac{\varepsilon _{1}}{\varepsilon _{2}^{2}}\right)
^{M+1}\sum_{i=0}^{M}\varepsilon _{2}^{i}\left\Vert \left( \tilde{u}%
_{i,M}^{BL}\right) ^{\prime \prime }\right\Vert _{\infty ,I} \\
&\lesssim&  \left( 
\frac{\varepsilon _{1}}{\varepsilon _{2}^{2}}\right)
^{M+1}\sum_{i=0}^{M}\varepsilon _{2}^{i}\tilde{\gamma}^{i+M}\frac{(2i+M)^{2i+M}}{i!} \\
&\lesssim &\left( \frac{\varepsilon _{1}}{\varepsilon _{2}^{2}}\right)
^{M+1}\sum_{i=0}^{M}\varepsilon _{2}^{i}\tilde{\gamma}%
^{i+M}e^{2i} (4 i)^{i}e^{M} M^{M} \\
&\lesssim& \left( \frac{\varepsilon _{1}}{%
\varepsilon _{2}^{2}}\tilde{\gamma} e M  \right) ^{M+1}\sum_{i=0}^{M}(\varepsilon _{2}\tilde{%
\gamma}e^2 4M)^{i} \\
&\lesssim &\left( \frac{\varepsilon _{1}}{\varepsilon _{2}^{2}}\tilde{\gamma}%
e M\right) ^{M+1}.
\end{eqnarray*}
Similarly, in the stretched variable $\hat{x}$ we have with the help of (\ref%
{c1BLb}), 
\begin{eqnarray*}
\hat{L}\hat{u}_{M}^{BL} &=&\sum_{i=0}^{M}\sum_{j=0}^{M}\varepsilon
_{2}^{i}(\varepsilon _{1}/\varepsilon _{2}^{2})^{j}\hat{L}\hat{u}_{i,j}^{BL}
\\
&=&\sum_{i=0}^{M}\sum_{j=0}^{M}\varepsilon _{2}^{i}(\varepsilon
_{1}/\varepsilon _{2}^{2})^{j}\left\{ -\frac{\varepsilon _{2}^{2}}{%
\varepsilon _{1}}\left( \hat{u}_{i,j}^{BL}\right) ^{\prime \prime
}-\sum_{k=0}^{M} \left[ \frac{\varepsilon _{2}^{2}}{%
\varepsilon _{1}}\hat{b}_{k}\left( \hat{u}_{i,j}^{BL}\right) ^{\prime }+\hat{%
c}_{k}\hat{u}_{i,j}^{BL}\right] \right\}  \\
&=&\left( \frac{\varepsilon _{1}}{\varepsilon _{2}^{2}}\right)
^{M}\sum_{i=0}^{M}\varepsilon _{2}^{i}\left( \hat{u}_{i,M}^{BL}\right)
^{\prime \prime },
\end{eqnarray*}%
and thus 
\begin{eqnarray*}
\left\Vert \hat{L}\hat{u}_{M}^{BL}\right\Vert _{\infty ,I} &\leq &\left( 
\frac{\varepsilon _{1}}{\varepsilon _{2}^{2}}\right)
^{M}\sum_{i=0}^{M}\varepsilon _{2}^{i}\left\Vert \left( \hat{u}%
_{i,M}^{BL}\right) ^{\prime \prime }\right\Vert _{\infty ,I} \\
&\lesssim& \left( 
\frac{\varepsilon _{1}}{\varepsilon _{2}^{2}}\right)
^{M}\sum_{i=0}^{M}\varepsilon _{2}^{i}\hat{\gamma}^{i+M}\frac{(i+2M)^{i+2M}}{M!} \\
&\lesssim &\left( \frac{\varepsilon _{1}}{\varepsilon _{2}^{2}}\hat{\gamma}%
eM\right) ^{M},
\end{eqnarray*}%
by following the exact same steps as above. Therefore, 
\begin{eqnarray*}
\left\Vert Lr_{M}\right\Vert _{\infty ,I} &=&\left\Vert L\left( u-u_{M}-%
\tilde{u}_{M}^{BL}-\hat{u}_{M}^{BL}\right) \right\Vert _{\infty ,I} \\
&\leq &\left\Vert L\left( u-u_{M}\right) \right\Vert _{\infty ,I}+\left\Vert
\tilde{L}\tilde{u}_{M}^{BL}\right\Vert _{\infty ,I}+\left\Vert \hat{L}\hat{u}%
_{M}^{BL}\right\Vert _{\infty ,I} \\
&\lesssim &\varepsilon _{2}\left( \varepsilon _{2}MK_{2}\right) ^{M}+\left( 
\frac{\varepsilon _{1}}{\varepsilon _{2}^{2}}\tilde{\gamma}eM\right)
^{M+1}+\left( \frac{\varepsilon _{1}}{\varepsilon _{2}^{2}}\hat{\gamma}%
eM\right) ^{M}.
\end{eqnarray*}%
Under the assumptions of the theorem, we have shown that the remainder $r_{M}
$ has exponentially small values at the endpoints of $I,$ and $Lr_{M}$ is
uniformly bounded by an arbitrarily small quantity on $I$. By stability
(see, e.g., \cite{L}) we have the desired result.
\end{proof}

The bounds of the previous theorem are of utmost importance in the design
and proof of convergence (independently of $\varepsilon _{1},\varepsilon
_{2} $) of high order numerical methods, e.g. the $hp$ Finite Element
Method (see, e.g. \cite{SX}). The bounds on the boundary layers tell us \emph{how} to design the
mesh for the approximation, so that the negative powers of $\varepsilon
_{1},\varepsilon _{2}$ are eliminated. The bounds on the smooth part, allow
us to \emph{prove} exponential convergence of the numerical method (see,
e.g., \cite{melenk}).

\subsection{The regime $\varepsilon _{1}\approx \varepsilon_{2}^{2}\label{regime2}$}

Now there are layers at both endpoints of width $O(\varepsilon _{2})$ and
the BVP becomes reaction-diffusion like the one studied in \cite{melenk97}. So with $%
\tilde{x}=x/\varepsilon _{2},\overline{x}=(1-x)/\varepsilon _{2},$ we make,
analogously as in the previous case, the formal ansatz 
\begin{equation}
u\sim \sum_{i=0}^{\infty }\varepsilon _{2}^{i}\left( u_{i}(x)+\tilde{u}%
_{i}^{BL}(\tilde{x})+\overline{u}_{i}^{BL}(\overline{x})\right) ,  \label{c2}
\end{equation}%
with $u_{i},\tilde{u}_{i}^{BL},\overline{u}_{i}^{BL}$ to be determined. \
Substituting (\ref{c2}) into (\ref{de}), separating the slow and
fast variables, and equating like powers of $%
\varepsilon _{1}(\approx \varepsilon _{2}^{2})$ and $\varepsilon _{2}$ we get (see 
\cite{Irene} for details) 
\begin{equation}
\left. 
\begin{array}{c}
u_{0}(x)=\frac{f(x)}{c(x)}\;,\;u_{1}(x)=-\frac{b(x)}{c(x)}u_{0}^{\prime }(x)
\\ 
u_{i}(x)=\frac{1}{c(x)}\left( u_{i-2}^{\prime \prime
}(x)-b(x)u_{i-1}^{\prime }(x)\right) ,i\geq 2%
\end{array}%
\right\} , \label{c2smooth}
\end{equation}%
\begin{equation}
\left. 
\begin{array}{c}
-\left( \tilde{u}_{0}^{BL}\right) ^{\prime \prime }+\tilde{b}_{0}\left( 
\tilde{u}_{0}^{BL}\right) ^{\prime }+\tilde{c}_{0}\tilde{u}_{0}^{BL}=0 \\ 
-\left( \tilde{u}_{i}^{BL}\right) ^{\prime \prime }+\tilde{b}_{0}\left( 
\tilde{u}_{i}^{BL}\right) ^{\prime }+\tilde{c}_{0}\tilde{u}%
_{i}^{BL}=-\sum_{k=1}^{i}\left( \tilde{b}_{k}\left( \tilde{u}%
_{i-k}^{BL}\right) ^{\prime }+\tilde{c}_{k}\tilde{u}_{i-k}^{BL}\right)
,i\geq 1%
\end{array}%
\right\},  \label{c2BLa}
\end{equation}%
\begin{equation}
\left. 
\begin{array}{c}
-\left( \overline{u}_{i}^{BL}\right) ^{\prime \prime }+\bar{b}_{0}\left( 
\overline{u}_{i}^{BL}\right) ^{\prime }+\bar{c}_{0}\overline{u}_{i}^{BL}=0
\\ 
-\left( \overline{u}_{i}^{BL}\right) ^{\prime \prime }+\bar{b}_{0}\left( 
\overline{u}_{i}^{BL}\right) ^{\prime }+\bar{c}_{0}\bar{u}%
_{i}^{BL}=\sum_{k=1}^{i}\left( \bar{b}_{k}\left( \bar{u}_{i-k}^{BL}\right)
^{\prime }-\bar{c}_{k}\bar{u}_{i-k}^{BL}\right) ,i\geq 1%
\end{array}%
\right\},  \label{c2BLb}
\end{equation}%
where the notation $\tilde{b}_{k}(\tilde{x})=\tilde{x}^{k}b^{(k)}(0)/k!$
etc., is used again. The above equations are supplemented with the following
boundary conditions (in order to satisfy (\ref{bc})):

\begin{equation}
\left. 
\begin{array}{c}
u_{i}(0)+\tilde{u}_{i}^{BL}(0)=0 \\ 
u_{i}(1)+\overline{u}_{i}^{BL}(0)=0 \\ 
\lim_{\tilde{x}\rightarrow \infty }\tilde{u}_{i}^{BL}(\tilde{x})=0\,,\,\lim_{%
\overline{x}\rightarrow \infty }\overline{u}_{i}^{BL}(\overline{x})=0%
\end{array}%
\right\}.  \label{c2BC}
\end{equation}
We then define, for some $M\in \mathbb{N},$ 
\begin{equation*}
u_{M}(x)=\sum_{i=0}^{M}\varepsilon _{2}^{i}u_{i}(x),\tilde{u}_{M}^{BL}(%
\tilde{x})=\sum_{i=0}^{M}\varepsilon _{2}^{i}\tilde{u}_{i}^{BL}(\tilde{x}),%
\overline{u}_{M}^{BL}(\overline{x})=\sum_{i=0}^{M}\varepsilon _{2}^{i}%
\overline{u}_{i}^{BL}(\overline{x}),
\end{equation*}
as well as 
\begin{equation}
u=u_{M}+\tilde{u}_{M}^{BL}+\overline{u}_{M}^{BL}+r_{M}.  \label{decomp2}
\end{equation}
We have the following theorem.

\begin{theorem}
\label{thm_reg2}Assume (\ref{analytic}), (\ref{data}) hold, and that 
$\varepsilon _{1}\approx \varepsilon_{2}^{2}$. Then there exist
positive constants $K_{1},K_{2},\tilde{K},\overline{K},\delta $, independent
of $\varepsilon _{1},\varepsilon _{2},$ such that the solution $u$ of (\ref%
{de})--(\ref{bc}) can be decomposed as in (\ref{decomp2}), with 
\begin{equation*}
\left\Vert u_{M}^{(n)}\right\Vert _{\infty ,I}\lesssim n!K_{1}^{n}\;\forall
\;n\in \mathbb{N}_{0},
\end{equation*}%
\begin{equation*}
\left\vert \left( \tilde{u}_{M}^{BL}\right) ^{(n)}(x)\right\vert \lesssim 
\tilde{K}^{n}\varepsilon _{2}^{-n}e^{-\beta x/\varepsilon_{2}}\;\forall \;n\in \mathbb{N}_{0},
\end{equation*}%
\begin{equation*}
\left\vert \left( \overline{u}_{M}^{BL}\right) ^{(n)}(x)\right\vert \lesssim 
\overline{K}^{n}\varepsilon _{2}^{-n}e^{-\beta(1-x)/\varepsilon_{2}}\;\forall \;n\in \mathbb{N}_{0},
\end{equation*}%
\begin{equation*}
\left\Vert r_{M}\right\Vert _{\infty ,\partial I}+\left\Vert
r_{M}\right\Vert _{0,I}+\varepsilon _{2}\left\Vert r_{M}^{\prime
}\right\Vert _{0,I}\lesssim e^{-\delta /\varepsilon _{2}},
\end{equation*}%
where $M$ is chosen so that $\varepsilon _{2}K_{2}M<1$, and $\beta = \min \left\{ \frac{\tilde{b}_0}{\tilde{c}_0}, \frac{\overline{b}_0}{\overline{c}_0} \right\}$.
\end{theorem}
\begin{proof}
When $\varepsilon _{1}\approx \varepsilon_{2}^{2}$, the BVP (\ref{de})--(\ref{bc}) becomes
\begin{eqnarray*}
-u^{\prime \prime }(x)+
\varepsilon^{-1} _{2}b(x)u^{\prime}(x)+\varepsilon^{-2}_2 c(x) u(x) &=&\varepsilon^{-2}_2f(x)\;,\;x\in I=\left( 0,1\right) ,   \\
u(0)=u(1) &=&0.
\end{eqnarray*}
Multiplying the differential equation above by $e^{-\int_0^x \varepsilon^{-1}_2 b(t) dt}$, gives
\begin{equation*}
-\left( u^{\prime }(x) e^{-\int_0^x \varepsilon_2 b(t) dt} \right)' +e^{-\int_0^x \varepsilon^{-1}_2 b(t) dt} c(x)u(x) = e^{-\int_0^x \varepsilon^{-1}_2 b(t) dt} f(x),
\end{equation*}
or equivalently, with $v(x) =  e^{-\int_0^x \varepsilon^{-1}_2 b(t) dt} u(x), F(x) = e^{-\int_0^x \varepsilon^{-1}_2 b(t) dt} f(x)$,
\begin{eqnarray*}
-\varepsilon^{2}_2 v^{\prime \prime }(x)+c(x) v(x) &=&F(x)\;,\;x\in I=\left( 0,1\right) ,   \\
v(0)=v(1) &=&0.
\end{eqnarray*}
The above BVP is in the form considered in \cite{melenk97}, with $c(x)$, $F(x)$ analytic -- the analyticity of
$F(x)$ follows from the analyticity of $b$ and $f$. The desired bounds follow from the results in  \cite{melenk97},
and the fact that $\vert u^{(n)} \vert < \vert v^{(n)} \vert $.
\end{proof}

\subsection{The regime $\varepsilon _{2}^{2}\ll \varepsilon %
_{1}\ll1\label{regime3}$}

We anticipate layers at both endpoints of width $O\left( \sqrt{\varepsilon
_{1}}\right) $. So we define the \emph{stretched variables} $\check{x}=x/%
\sqrt{\varepsilon _{1}}$ and $\grave{x}=(1-x)/\sqrt{\varepsilon _{1}}$ and
make the formal ansatz, analogous to the previous cases, 
\begin{equation}
u\sim \sum_{i=0}^{\infty }\sum_{j=0}^{\infty }\varepsilon _{1}^{i/2}\left(
\varepsilon _{2}/\sqrt{\varepsilon _{1}}\right) ^{j}\left( u_{i,j}(x)+\check{%
u}_{i,j}^{BL}(\check{x})+\grave{u}_{i,j}^{BL}(\grave{x})\right) ,  \label{c3}
\end{equation}%
with $u_{i,j},\check{u}_{i,j}^{BL},\grave{u}_{i,j}^{BL}$ to be determined.
Substituting (\ref{c3}) into (\ref{de}), separating the slow and
fast variables, and equating like powers of $%
\varepsilon _{1}$ and $\varepsilon _{2}$ we get (see \cite{Irene} for the details) 
\begin{equation}
\left. 
\begin{array}{c}
u_{0,0}=\frac{f(x)}{c(x)},u_{1,0}(x)=u_{0,j}(x)=0,j\geq 1 \\ 
u_{i,0}(x)=\frac{1}{c(x)}u_{i-2,0}^{\prime \prime }(x),i\geq 2 \\ 
u_{2i+1,0}(x)=0,i\geq 1 \\ 
u_{1,1}(x)=-\frac{b(x)}{c(x)}u_{0,0}^{\prime }(x),u_{1,j}(x)=0,j\geq 2 \\ 
u_{i,j}(x)=\frac{1}{c(x)}\left( u_{i-2,j}^{\prime \prime
}(x)-b(x)u_{i-1,j-1}^{\prime }(x)\right) ,i\geq 2,j\geq 1%
\end{array}%
\right\} , \label{c3smooth}
\end{equation}%
\begin{equation}
\left. 
\begin{array}{c}
-\left( \check{u}_{0,0}^{BL}\right) ^{\prime \prime }+\check{c}_{0}\check{u}%
_{0,0}^{BL}=0\\ 
-\left( \check{u}_{i,0}^{BL}\right) ^{\prime \prime }+\check{c}_{0}\check{u}%
_{i,0}^{BL}=-\sum_{k=i}^{i}\check{c}_{k}\check{u}_{i-k,0}^{BL},i\geq 1 \\ 
-\left( \check{u}_{0,j}^{BL}\right) ^{\prime \prime }+\check{c}_{0}\check{u}%
_{0,j}^{BL}=-\check{b}_{0}\left( \check{u}_{0,j-1}^{BL}\right) ^{\prime
},j\geq 1 \\ 
-\left( \check{u}_{i,j}^{BL}\right) ^{\prime \prime }+\check{c}_{0}\check{u}%
_{i,j}^{BL}=-\check{b}_{0}\left( \check{u}_{i,j-1}^{BL}\right) ^{\prime }-
\\ 
\sum_{k=1}^{i}\left\{ \check{b}_{k}\left( \check{u}_{i-k,j-1}^{BL}\right)
^{\prime }+\check{c}_{k}\check{u}_{i-k,j}^{BL}\right\} ,i\geq 1,j\geq 1%
\end{array}%
\right\},  \label{c3BLa}
\end{equation}%
\begin{equation}
\left. 
\begin{array}{c}
-\left( \grave{u}_{0,0}^{BL}\right) ^{\prime \prime }+\grave{c}_{0}\grave{u}%
_{0,0}^{BL}=0 \\ 
-\left( \grave{u}_{i,0}^{BL}\right) ^{\prime \prime }+\grave{c}_{0}\grave{u}%
_{i,0}^{BL}=-\sum_{k=1}^{i}\grave{c}_{k}\grave{u}_{i-k,0}^{BL},i\geq 1 \\ 
-\left( \grave{u}_{0,j}^{BL}\right) ^{\prime \prime }+\grave{c}_{0}\grave{u}%
_{0,j}^{BL}=\grave{b}_{0}\grave{u}_{0,j-1}^{BL},j\geq 1 \\ 
-\left( \grave{u}_{i,j}^{BL}\right) ^{\prime \prime }+\grave{c}_{0}\grave{u}%
_{i,j}^{BL}=\left( \grave{b}_{0}\grave{u}_{i,j-1}^{BL}\right) ^{\prime }- \\ 
\sum_{k=1}^{i}\left\{ \grave{b}_{k}\left( \grave{u}_{i-k,j-1}^{BL}\right)
^{\prime }-\grave{c}_{k}\grave{u}_{i-k,j}^{BL}\right\} ,i\geq 1,j\geq 1%
\end{array}%
\right\},  \label{c3BLb}
\end{equation}%
where the notation $\check{b}_{k}(\check{x})=\check{x}^{k}b^{(k)}(0)/k!$
etc., is used once more. The above equations are supplemented with the
following boundary conditions (in order to satisfy (\ref{bc})):

\begin{equation}
\left. 
\begin{array}{c}
\check{u}_{i,j}^{BL}(0)=-u_{i,j}(0) \\ 
\grave{u}_{i,j}^{BL}(0)=-u_{i,j}(1)\\ 
\lim_{\check{x}\rightarrow \infty }\check{u}_{i,j}^{BL}(\check{x}%
)=0\,,\,\lim_{\grave{x}\rightarrow \infty }\grave{u}_{i,j}^{BL}(\grave{x})=0%
\end{array}%
\right\} . \label{c3BC}
\end{equation}%
The following result is established in a completely analogous way as in the
previous cases (cf. Section \ref{regime1}).

\begin{lemma}
\label{lemma_c3smooth}Let $u_{i,j}$ be defined by (\ref{c3smooth}) and
assume (\ref{analytic}) holds. Then there exist positive constants $C,
K_{1},K_{2}$, independent of $\varepsilon_1, \varepsilon_2$, such that 
\begin{equation*}
\Vert u_{i,j}^{(n)}\Vert _{\infty ,I}\leq C
n!K_{1}^{n}i!K_{2}^{i}\,\,\forall \,\,n\in \mathbb{N}.
\end{equation*}
\end{lemma}

Next we consider the boundary layers. The following result was shown in 
\cite{melenk}.

\begin{proposition}[{\protect\cite[Lemma 7.3.6]{melenk}}]
\label{lemma_aux0} Let $\lambda \in \mathbb{C}$ with $Re(\lambda) >0,
Re(\lambda)^{2}>0$. Let $F$ be an entire function satisfying, for some $%
C_{F}>0,$ $j\in \mathbb{N}_{0},$ $q\geq (j+1/2)/|\lambda |>0,$ 
\begin{equation*}
\left\vert F(z)\right\vert \leq C_{F}e^{-Re(\lambda z)}\left( q+\left\vert
z\right\vert \right) ^{j}\;\;\forall \;z\in \mathbb{C}.
\end{equation*}%
Let $\alpha \in \mathbb{C}$ and let $v:(0,\infty )\rightarrow \mathbb{C},$
be the solution of the problem%
\begin{equation*}
-v^{\prime \prime }+\lambda ^{2}v=F\text{ on }(0,\infty
)\;,\;v(0)=g\;,\;\lim_{x\rightarrow \infty }v(x)=0.
\end{equation*}%
Then $v$ can be extended to an entire function (denoted again by $v$), which
satisfies 
\begin{equation*}
\left\vert v(z)\right\vert \leq \left[ C_{F}\frac{1}{|\lambda |}\left(
q+\left\vert z\right\vert \right) ^{j+1}\left( j+1\right) ^{-1}+\left\vert
g\right\vert \right] e^{-Re(\lambda z)}\;\;\forall \;z\in \mathbb{C}.
\end{equation*}
\end{proposition}

Using the above we may prove the following.

\begin{lemma}
\label{lemma_c3BL}Let $\check{u}_{i,j}^{BL},\grave{u}_{i,j}^{BL}$ be defined
by (\ref{c3BLa}), (\ref{c3BLb}), respectively. Then there exist positive
constants $C, \check{\gamma}, \check{K}, \grave{\gamma}, \grave{K}$, depending only on the data, such that $\forall \,\,n\in \mathbb{N},$ 
\begin{equation*}
\left\vert \left( \check{u}_{i,j}^{BL}\right) ^{(n)}(x)\right\vert \leq C \check{\gamma}^{i+j} (i+j)!
\check{K}^{n}\varepsilon _{1}^{-n/2}\,e^{-\beta x/\sqrt{\varepsilon _{1}}%
}\quad \forall \; \; i,j\geq 0,
\end{equation*}%
\begin{equation*}
\left\vert \left( \grave{u}_{i,j}^{BL}\right) ^{(n)}(x)\right\vert \leq C \grave{\gamma}^{i+j} (i+j)!
\grave{K}^{n}\varepsilon _{1}^{-n/2}\,e^{-\beta (1-x)/\sqrt{\varepsilon _{1}}%
}\quad \forall \; \; i,j\geq 0,
\end{equation*}
where $\beta = \min \left\{ \check{c}_{0}, \grave{c}_{0} \right\}$.
\end{lemma}

\begin{proof}
First we show a similar result as (\ref{claim}), by induction on $i,j$ with the help of Proposition \ref{lemma_aux0}. 
For $n>0$, the proof is almost identical to that of Lemma  \ref{lemma_c1BL1}, utilizing Cauchy's Integral Theorem for 
Derivatives. The details appear in \cite{Irene}.
\end{proof}

We then define, for some $M\in \mathbb{N},$%
\begin{eqnarray*}
u_{M}(x) &=&\sum_{i=0}^{M}\sum_{j=0}^{M}\varepsilon _{1}^{i/2}\left(
\varepsilon _{2}/\sqrt{\varepsilon _{1}}\right) ^{j}u_{i,j}(x), \\
\check{u}_{M}^{BL}(\check{x}) &=&\sum_{i=0}^{M}\sum_{j=0}^{M}\varepsilon
_{1}^{i/2}\left( \varepsilon _{2}/\sqrt{\varepsilon _{1}}\right) ^{j}\check{u%
}_{i,j}^{BL}(\check{x}), \\
\grave{u}_{M}^{BL}(\grave{x}) &=&\sum_{i=0}^{M}\sum_{j=0}^{M}\varepsilon
_{1}^{i/2}\left( \varepsilon _{2}/\sqrt{\varepsilon _{1}}\right) ^{j}\grave{u%
}_{i,j}^{BL}(\grave{x}),
\end{eqnarray*}%
and we have the following decomposition:%
\begin{equation}
u=u_{M}+\check{u}_{M}^{BL}+\grave{u}_{M}^{BL}+r_{M}.  \label{decomp3}
\end{equation}%
The theorem that follows is the analog of Theorem \ref{thm_reg1} and its
proof is almost identical.
Nevertheless, it is worth commenting on the fact that $\varepsilon_2$ does not appear in the statement of 
Theorem \ref{thm_reg3}. In the regime $\varepsilon_2 \ll \varepsilon_1$, 
the perturbation in the first order term is a regular perturbation, and as such benigne. As a result, its lack 
of presence in Theoreom \ref{thm_reg3} is not an issue.

\begin{theorem}
\label{thm_reg3}Assume (\ref{analytic}), (\ref{data}) hold. Then there exist
positive constants $K_{1},\check{K},\grave{K},K_{2}$ and $\delta $,
independent of $\varepsilon _{1},\varepsilon _{2},$ such that the solution $%
u $ of (\ref{de})--(\ref{bc}) can be decomposed as in (\ref{decomp3}), with%
\begin{equation*}
\left\Vert u_{M}^{(n)}\right\Vert _{\infty ,I}\lesssim n!K_{1}^{n}\;\; \forall
\;n\in \mathbb{N}_{0},
\end{equation*}%
\begin{equation*}
\left\vert \left( \check{u}_{M}^{BL}\right) ^{(n)}(x)\right\vert \lesssim 
\check{K}^{n}\varepsilon _{1}^{-n/2}e^{-\beta x/\sqrt{\varepsilon_{1}}}\quad \forall \;n\in \mathbb{N}_{0},
\end{equation*}%
\begin{equation*}
\left\vert \left( \grave{u}_{M}^{BL}\right) ^{(n)}(x)\right\vert \lesssim 
\grave{K}^{n}\varepsilon _{1}^{-n/2}e^{-\beta (1-x)/\sqrt{\varepsilon
_{1}}}\quad \forall \;n\in \mathbb{N}_{0},
\end{equation*}%
\begin{equation*}
\left\Vert r_{M}\right\Vert _{\infty ,\partial I}+\left\Vert
r_{M}\right\Vert _{0,I}+\varepsilon _{1}^{1/2}\left\Vert r_{M}^{\prime
}\right\Vert _{0,I}\lesssim e^{-\delta /\sqrt{\varepsilon _{1}}},
\end{equation*}%
where $M$ is chosen so that $\sqrt{\varepsilon _{1}}K_{2}M<1$, and $\beta = \min \left\{ \check{c}_{0}, \grave{c}_{0} \right\}$.
\end{theorem}

\subsection{On the transition between regimes}

As a final question, we would like to see what happens when we fix $\varepsilon_1 \ll 1$ and consider
$\varepsilon_2 \in (0, 1]$. In Figure \ref{fig1} we show the solution of the BVP
\begin{eqnarray*}
-0.005 u''(x) + \varepsilon_2 u'(x) + u(x) &=& 1, \: x \in (0,1) \\
u(0) = u(1) &=& 0,
\end{eqnarray*}
for $\varepsilon_2 \in (0,1]$.
\begin{figure}[h]
\begin{center}
\includegraphics[width=0.475 \textwidth]{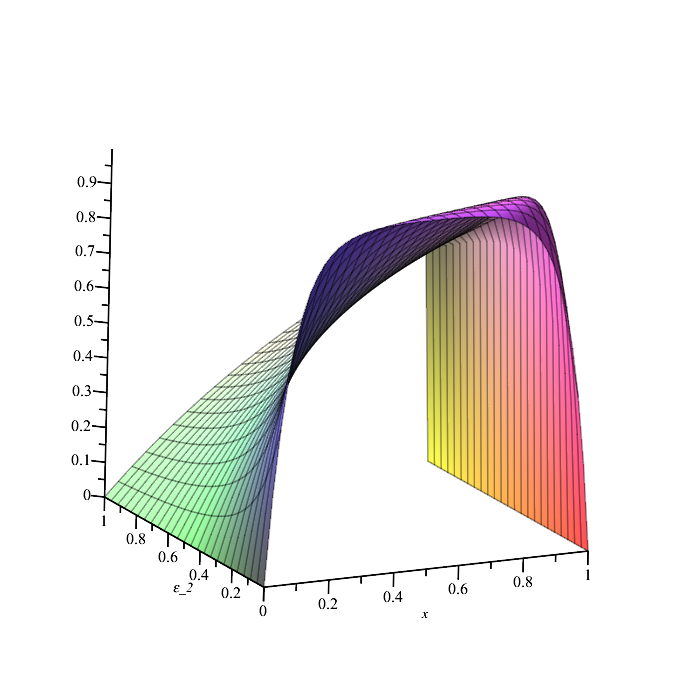}\mbox{}
\includegraphics[width=0.475 \textwidth]{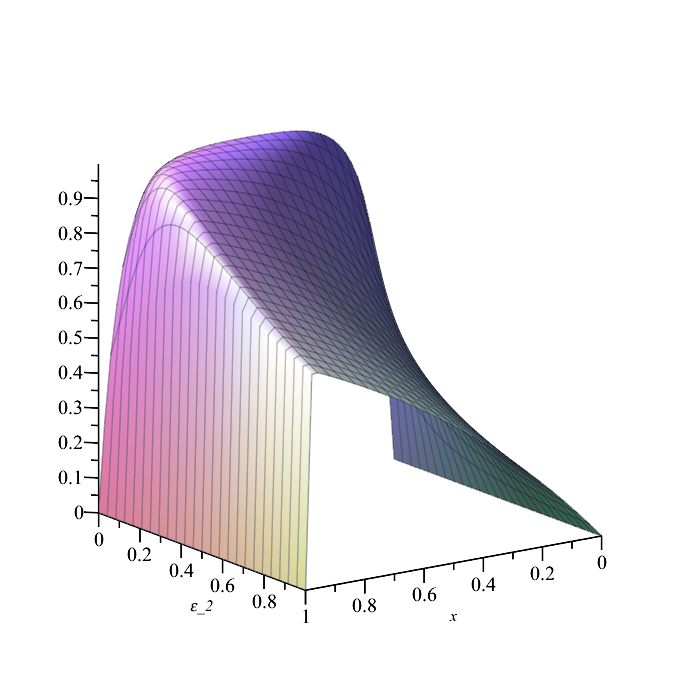}
\end{center}
\caption{The solution $u(x)$ as a function of $x$ and $\varepsilon_2$ (different viewing angles).}
\label{fig1}
\end{figure}
The figure shows that the trasition between regimes appears ``seamless'', 
in the following sense: as $\varepsilon_2$ takes on values in $(0,1]$,
the solution $u$ ``smoothly moves''  from one regime to the other, based on the relationship between $\varepsilon_2$
and (the fixed, but small) $\varepsilon_1$. In particular:
\begin{itemize}
\item If $\varepsilon_2=1$, then we have a convection-diffusion problem, and we have a layer
of width $O(\varepsilon_1)$ at the outflow boundary. (This is clearly visible in Figure \ref{fig1}, on the right.)

\item If $1 > \varepsilon_2 > \sqrt{\varepsilon_1}$, then we have a reaction-convection-diffusion problem,
with layers of different width at each endpoint.

\item If $0 < \varepsilon_2 \leq \sqrt{\varepsilon_1}$, then we have a reaction-diffusion problem, with layers
of width $O(\sqrt{\varepsilon_1})$ at each endpoint. (See Figure \ref{fig1}, on the left.)
\end{itemize}

The error bounds of the previous sections allow us to state the following:

\begin{proposition}
Let $\varepsilon_1 \ll 1$ be fixed, and let $u$ be the solution of (\ref{de})--(\ref{bc}) under the 
assumption (\ref{analytic}). Then for any $\varepsilon_2 \in (0, 1)$, there exist positive constants
$K_1, K_2, K_3, \delta$, independent of $\varepsilon_1, \varepsilon_2$, such that
\[
u = u_S + u^{\pm}_{BL} + u_R.
\]
The smooth part $u_S$, satisfies for any $n \in \mathbb{N}$ , $\varepsilon_2 \in (0,1)$,
\[
\left\Vert u_S^{(n)} \right\Vert_{\infty, I} \lesssim K_1^n n!.
\]
The boundary layer parts $u^{\pm}_{BL}$,  satisfy for any $n \in \mathbb{N}$ and 
\begin{itemize}
\item $\forall \; \; \varepsilon_2 \in (0,\sqrt{\varepsilon_1}],  \; x \in \overline{I}$,
\[
\left\vert (u_{BL}^{\pm})^{(n)} (x)\right\vert \lesssim K_2^n n! \varepsilon_1^{-n/2} 
e^{- \beta \text{ dist } (x, \partial I) / \sqrt{\varepsilon_1}} ,
\]
\item $\forall \; \; \varepsilon_2 \in (\sqrt{\varepsilon_1},1) , \; x \in \overline{I}$,
\[
\left\vert (u_{BL}^{-})^{(n)}(x) \right\vert \lesssim K_2^n n! \varepsilon_2^{-n} 
e^{- \beta x / \varepsilon_2 }  \; , \;  \left\vert (u_{BL}^{+})^{(n)} \right\vert \lesssim K_2^n n! 
\left( \frac{\varepsilon_1}{\varepsilon_2}\right)^{-n} e^{- \beta (1-x) \varepsilon_2 / \varepsilon_1}.
\]
\end{itemize}
The remainder $u_R$, satisfies for any $n \in \mathbb{N}$, $\varepsilon_2 \in (0,1)$,
\[
\Vert u_R \Vert_{\infty, \partial I} + \Vert u_R \Vert_{0, I} +  
\min\{ \varepsilon_2 , \sqrt{\varepsilon_1} \} \Vert u'_R \Vert_{0, I} \lesssim
\max\{ e^{ - \delta \varepsilon_2 / \varepsilon_1},  e^{ - \delta  / \varepsilon_2}\}.
\]

\end{proposition}

\section{Conclusions}

\label{concl} We considered a two-point, singularly perturbed,
reaction-convection-diffusion problem with analytic input data, and we derived
regularity results for its solution. Based on the relationship between the
singular perturbation parameters, the problem becomes convection-diffusion,
reaction-diffusion or reaction-convection-diffusion, as shown in Table 1. We
provided estimates for all three cases (regimes), which reveal the analytic
nature of the solution and give derivative bounds which are explicit in the
differentiation order as well as the singular perturbation parameters. Such
estimates are necessary for the construction and analysis of high order
numerical methods, such as $hp$ FEM (see, e.g. \cite{SX}).




\begin{thebibliography}{99}






%



%

\bibitem{BZ} M. Brdar and H. Zarin, \emph{A singularly perturbed problem
with two parameters on a Bakhvalov-type mesh}, J. Comp. Appl. Math., \textbf{%
292} (2016) 307--319.

\bibitem{GORP} J. L. Garcia, E. O'Riordan and M. L. Pickett, \emph{A
parameter robust high order numerical method for a singularly perturbed two
parameter problem}, Appl. Num. Math., \textbf{56} (2006) 962--980.

\bibitem{L} T. Lin\ss , \emph{Layer-adapted meshes for
reaction-convection-diffusion problems}, Lecture Notes in Mathematics 1985,
Springer-Verlag, 2010.

\bibitem{LR} T. Lin\ss\ and H. G. Roos, \emph{Analysis of a finite
difference scheme for a singularly perturbed problem with two small
parameters}, Comput. Meth. Appl. Math., \textbf{289} (2004) 355--366.

\bibitem{melenk} J. M. Melenk, \emph{hp Finite Element Methods for Singular
Perturbations}, Lecture Notes in Mathematics 1796, Springer-Verlag, 2002.

\bibitem{melenk97} J. M. Melenk, \emph{On the robust exponential convergence
of hp finite element methods for problems with boundary layers}, IMA J. Num.
Anal., \textbf{17} (1997) 577--601.

\bibitem{ms} J. M. Melenk and C. Schwab, \emph{An hp Finite Element Method
for convection-diffusion problems in one-dimension}, IMA J. Num. Anal., 
\textbf{19} (1999) 425--453.

\bibitem{omalley} R. E. O'Malley, \emph{Singular Perturbation Methods for
Ordinary Differential Equations}, Springer-Verlag (1991).

\bibitem{mos} J. J. H. Miller, E. O'Riordan and G. I. Shishkin, \emph{Fitted
Numerical Methods Singular Perturbation Problems}, World Scientific, 1996.

\bibitem{morton} K. W. Morton, \emph{Numerical Solution of
Convection-Diffusion Problems}, Volume 12 of Applied Mathematics and
Mathematical Computation, Chapman \& Hall, 1996.

\bibitem{Olver} F. W. J. Olver, \emph{Asymptotics and Special Functions},  
Academic Press, 1974.

\bibitem{ORPS} E. O'Riordan, M. L. Pickett and G. I. Shishkin, \emph{%
Singularly perturbed problems modelling reaction-convection-diffusion
processes}, Comput. Meth. Appl. Math., \textbf{3} (2003) 424--442.

\bibitem{rst} H.-G{.} Roos, M. Stynes, and L. Tobiska. 
\newblock {\em Robust numerical methods for singularly perturbed differential
  equations}, volume~24 of \emph{Springer Series in Computational Mathematics%
}. \newblock Springer-Verlag, Berlin, second edition, 2008. \newblock %
Convection-diffusion-reaction and flow problems.

\bibitem{RU} H.-G. Roos and Z. Uzelac, \emph{The SDFEM\ for a
Convection-Diffusion Problem with Two Small Parameters}, Comput. Methods
Appl. Math., \textbf{3} (2003) 443--458.

\bibitem{Irene} I. Sykopetritou, \emph{An }$hp$ \emph{finite element method
for a second order singularly perturbed boundary value problem with two
small parameters}, Ph.D. Dissertation, Department of Mathematics \&\ Statistics,
University of Cyprus, expected (2024).

\bibitem{SX}I. Sykopetritou and C. Xenophontos, \emph{An hp finite element method for a 
convection-reaction-diffusion singularly perturbed problem with two small parameters}, 
Int. J. Numer. Anal. Model., \textbf{18} (2021) 481--499.

\bibitem{TR} Lj. Teofanov and H.-G. Roos, \emph{An elliptic singularly
perturbed problem with two parameters I: Solution decompostition}, J.
Comput. Appl. Math., \textbf{206} (2007) 1802--1097.

\bibitem{TR2} Lj. Teofanov and H.-G. Roos, \emph{An elliptic singularly
perturbed problem with two parameters II: Robust finite element solution},
J. Comput. Appl. Math., \textbf{212} (2008) 374--389.



\end{thebibliography}
\end{document}